\theoremstyle{plain}
\newtheorem{theorem}{Theorem}[section]
\newtheorem{lemma}[theorem]{Lemma}
\newtheorem{cor}[theorem]{Corollary}
\theoremstyle{definition}
\newtheorem{definition}[theorem]{Definition}
\newtheorem{rem}[theorem]{Remark}
\newcommand\R{{\mathbb R}}
\newcommand{\deltai}{\dfrac{\partial}{\partial z^i}}
\newcommand{\deltak}{\dfrac{\partial}{\partial z^k}}
\newcommand{\deltaj}{\dfrac{\partial}{\partial z^j}}
\newcommand{\deltal}{\dfrac{\partial}{\partial z^l}}
\newcommand{\co}{{\mathcal O}}
\newcommand{\Oomega}{\overline\Omega}
\begin{document}

\title[Poisson structures]{Poisson structures on twistor spaces of hyperk\"ahler and HKT manifolds}
\author{Gueo Grantcharov,  Lisandra Hernandez-Vazquez}
\address{Department of Mathematics and Statistics\\
Florida International University\\
11200 S.W. 8th Street\\
Miami, Florida 33199\\
USA
}
\email{lhernanv@fiu.edu, grantchg@fiu.edu }
\subjclass[2000]{53C26, 53D18, 53D05}
\keywords{hypercomplex manifold, holomorphic Poisson structure, hyperk\"ahler manifold}
\thanks{This work was supported by a grant from Simons Foundation (\#246184)}
\begin{abstract}
We characterize HKT structures in terms of a nondegenrate complex Poisson
 bivector on a hypercomplex manifold. We extend the characterization to the twistor space. After considering the flat case in detail, we show that the twistor space of a hyperk\"ahler manifold admits a holomorphic Poisson structure. We briefly mention the relation to quaternionic and hypercomlex deformations on tori and K3 surfaces.
\end{abstract}
\maketitle
\vspace{.2in}

\section{Introduction}

HKT structures (an abreviation from hyperk\"ahler with torsion) were first introduced in String Theory (see \cite{HP}) as the structures induced on the target manifolds of $(4,0)$-supersymmetric sigma models with Wess-Zumino term. From a mathematical viewpoint, compact HKT manifolds share many properties with the K\"ahler ones. They have local potential functions \cite{BS, GP} and well defined Hodge theory \cite{V}, which for spaces with $SL(n, H)$-holonomy, leads to a characterization in dimension eight similar to the topological characterization of K\"ahler compact complex surfaces \cite{GLV}. On a hypercomplex manifold $M$, an HKT structure is given by a real positive $(2,0)$-form $\Omega$, which is $\partial$ closed. This also has a description in terms of a $\partial$-closed 2-form on its twistor space \cite{BS, GP}. As is well known, when $\Omega$ is closed, the structure is  hyperk\"ahler and $\Omega$ is holomorphic symplectic. A characterization of a hyperk\"ahler structure from a twistor space perspective is given in \cite{HKLR} in terms of the existence of a twisted holomorphic 2-form that is nondegenerate on each fiber of the twistor projection.

 It is known that holomorphic symplectic forms are dual to holomorphic Poisson bivectors of maximal rank. Holomorphic Poisson structures have been studied from different perspectives. Recently, the interest in such structures is growing due to their  connection to generalized complex geometry. One purpose of this note is to find characterizations of HKT structures on hypercomplex manifolds and their twistor spaces in terms of Poisson structures.

 To this end, we need the notion of complex Poisson structures. Lichnerowicz has introduced complex Poisson structures in \cite{L}. However, the definition in \cite{L} is more restrictive than the one we need. We collect the necessary facts about complex Poisson structures in Section 2. They are straightforward analogs of the properties of the  real Poisson structures. We note in particular that a nondegenerate complex Poisson structure is dual to a $\partial$-closed nondegenerate $(2,0)$ form. A direct consequence is the characterization in Section 3 of an HKT structure in terms of a complex Poisson one. Since a characterization in terms of twistor spaces is given in \cite{BS, GP}, we formulate a similar existence result for complex Poisson structures on the twistor spaces of HKT manifolds. We also provide an explicit form for the complex Poisson structures which arise in this way on $SU(3)$ and $SU(5)$. In this way we see that there is 1-parameter family of invariant HKT structures on $SU(3)$ and they are related to the holomorphic symplectic form on the nilpotent orbit of the highest root in $SL(3, \mathbb{C})$. The complex Poisson structure on $SU(5)$ is a sum of one on some embedded $SU(3)$ and one on $SU(5)/SU(3)$. In Section 4, we consider in detail the flat case using local coordinates. In particular, we observe that the twistor space has many commuting holomorphic vector fields, so there are also many holomorphic Poisson structures. Moreover, the complex Poisson structure found in Section 3 is also holomorphic. In the Section 5 we prove that this structure is holomorphic on the twistor space of any (possibly curved) hyperk\"ahler manifold. Intuitively, this follows from the fact that the holomorphic structure on the twistor space, which is determined by the Chern connection, depends on the Levi-Civita connection on the hyperk\"ahler base, but not on its curvature. This is not true for general twistor spaces. In the last section we mention the relation of this holomorphic Poisson structure to the quaternionic and hypercomplex deformations in the case of K3 surfaces and tori, following \cite{Hitchin}. In both cases the infinitesimal deformations we describe lead to actual deformations.
\vspace{.2in}

{\bf Acknowledgements:} We are grateful to D.Kaledin, M.Verbitsky and Y.S.Poon for their comments and interest in this work.


\section{Complex Poisson structures and complex symplectic forms}

Although Poisson structures on complex and almost complex manifolds have been introduced earlier in \cite{L, CFIU}, we need slightly different terminology, adapted to our purposes. A {\it complex Poisson structure} is a bivector $P$ on a (almost) complex manifold
of type $(2,0)$, such that $[P,P]=0$. This definition is useful when the manifold is complex and implicit in \cite{P}. In this note, this definition will be applied in the hypercomplex case. Note that it is weaker than the one given by Lichnerovicz \cite{L} since it doesn't imply that the real and imaginary part of $P$ commute: $[Re P,Im P] \neq 0$.

Let $M$ be a complex manifold and $T^{p,q}, \Lambda^{p,q}$  the spaces of $(p,q)$-vectors and $(p,q)$-forms, respectively. Then $\partial = d+id^c, \overline{\partial}=d-id^c$ are the standard operators defined by $\partial|_{\Lambda^{p,q}}=\pi_{p+1,q}\circ d$ for the projection $\pi_{p+1,q}:\Lambda^{p+q+1}\rightarrow \Lambda^{p+1,q}$. We have the following simple observation, similar to the real case:

\begin{lemma}\label{L1}
If $P$ is a $(2,0)$ bivector on a complex manifold, given by $P = \dfrac{1}{2} P^{ij} \deltai \wedge \deltaj$ in local complex chart $(z_1,z_2,...z_n)$, then for $i<j<k$  $$[P,P](dz^i,dz^j,dz^k) = 2\left(P^{ih}\partial_h P^{jk} + P^{jh}\partial_h P^{ki}+P^{kh}\partial_h P^{ij}\right)$$
\end{lemma}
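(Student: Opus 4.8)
The plan is to reduce the identity to the standard fact that the Schouten--Nijenhuis bracket $[P,P]$ of a bivector measures the failure of the associated bracket $\{f,g\}:=P(df,dg)$ to satisfy the Jacobi identity. First I would record the coordinate form of this bracket: from $P=\frac{1}{2}P^{ij}\,\deltai\wedge\deltaj$, the antisymmetry $P^{ij}=-P^{ji}$, and the fact that $P$ pairs only with the $(1,0)$-parts of $df,dg$, one finds
\[
\{f,g\}=P^{ij}\,\frac{\partial f}{\partial z^i}\,\frac{\partial g}{\partial z^j},
\qquad\text{so in particular}\qquad \{z^i,z^j\}=P^{ij}.
\]
The structural point that lets the real computation carry over verbatim is that the coordinate fields $\partial/\partial z^i$ are of type $(1,0)$ and commute, $[\partial/\partial z^i,\partial/\partial z^j]=0$, while acting on a (not necessarily holomorphic) function through the Wirtinger derivative $\partial/\partial z^i$. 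Consequently, in the $\mathbb{C}$-bilinear extension of the Schouten bracket, only the holomorphic derivatives $\partial/\partial z^h$ of the coefficients $P^{ij}$ ever enter, and $[P,P]$ is again of type $(3,0)$; this is exactly why the antiholomorphic derivatives are absent from the stated formula.

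Next I would invoke the identity $[P,P](df,dg,dh)=2\,\mathrm{Jac}(f,g,h)$, where
\[
\mathrm{Jac}(f,g,h)=\{f,\{g,h\}\}+\{g,\{h,f\}\}+\{h,\{f,g\}\},
\]
the coefficient $2$ being fixed by the normalization $P=\frac{1}{2}P^{ij}\,\deltai\wedge\deltaj$. Expanding one summand via the Leibniz rule, with $\partial_m:=\partial/\partial z^m$, gives
\[
\{f,\{g,h\}\}=P^{lm}\frac{\partial f}{\partial z^l}\Big[(\partial_m P^{ab})\frac{\partial g}{\partial z^a}\frac{\partial h}{\partial z^b}+P^{ab}\Big(\frac{\partial^2 g}{\partial z^m\partial z^a}\frac{\partial h}{\partial z^b}+\frac{\partial g}{\partial z^a}\frac{\partial^2 h}{\partial z^m\partial z^b}\Big)\Big].
\]
Summing cyclically over $(f,g,h)$, the terms carrying second derivatives of $f,g,h$ cancel in pairs by virtue of $P^{lm}=-P^{ml}$, exactly as in the real Poisson case; this cancellation is the one computation worth carrying out in full.

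Finally I would specialize $f=z^i$, $g=z^j$, $h=z^k$. Every first derivative of $f,g,h$ then becomes a Kronecker delta, collapsing the summations and killing the second-derivative terms, so the three cyclic contributions reduce to $P^{ih}\partial_h P^{jk}$, $P^{jh}\partial_h P^{ki}$, and $P^{kh}\partial_h P^{ij}$; together with the factor $2$ this is precisely the asserted identity, and the hypothesis $i<j<k$ only serves to isolate a single representative component of the totally antisymmetric object $[P,P]$. The main obstacle is bookkeeping rather than conceptual: one must pin down the normalization constant in the Jacobiator identity, verify the second-derivative cancellation carefully, and confirm that the $\mathbb{C}$-bilinear Schouten bracket of $(1,0)$-fields genuinely produces only $\partial/\partial z^h$-derivatives. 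A fully self-contained alternative avoids the Jacobiator identity altogether by expanding $[P,P]$ directly through the graded Leibniz rule $[A,B\wedge C]=[A,B]\wedge C+(-1)^{(a-1)b}B\wedge[A,C]$ applied to the decomposable pieces of $P$; this route is more sensitive to signs but invokes no external result.
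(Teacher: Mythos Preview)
Your argument is correct, but it runs in the opposite logical direction from the paper. The paper proves the lemma by expanding $[P,P]$ directly via the graded Leibniz rule for the Schouten bracket on decomposable pieces---precisely the ``self-contained alternative'' you sketch at the end---and only afterwards, in the proof of the next lemma, reads off the Jacobiator identity $[P,P](dz^i,dz^j,dz^k)=2\bigl(\{z^i,\{z^j,z^k\}\}+\text{cyclic}\bigr)$ as a consequence. You instead take the Jacobiator identity as the known input and specialize to coordinate functions. Both routes are sound; the paper's is fully self-contained at the cost of tracking four wedge-product terms and their signs, while yours imports one standard fact about the Schouten bracket and then has only the (routine) second-derivative cancellation to check. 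The one place to be careful in your version is the normalization constant, since the factor in $[P,P](df,dg,dh)=c\cdot\mathrm{Jac}(f,g,h)$ varies across references; you flag this correctly.
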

\begin{proof}

A direct calculation, similar to the real case gives
\begin{align*}
[P,P] &= \left[\dfrac{1}{2} P^{ij} \deltai \wedge \deltaj, \dfrac{1}{2} P^{kl} \deltak \wedge \deltal\right]\\
                                &=\dfrac{1}{4}\left(\left[P^{ij} \deltai, P^{kl} \deltak\right] \wedge \deltaj \wedge \deltal - \left[P^{ij} \deltai, \deltal\right] \wedge \deltaj \wedge \left(P^{kl}\deltak\right) \right)
                                \\&-\dfrac{1}{4}\left(\left[\deltaj, P^{kl} \deltak\right] \wedge \left(P^{ij}\deltai\right) \wedge \deltal + \left[\deltaj, \deltal\right] \wedge \left(P^{ij} \deltai\right)\wedge \left(P^{kl}\deltak\right)\right)\\
                                &=\dfrac{1}{4}\left( - P^{ij} \dfrac{\partial P^{kl}}{\partial z^i} \deltaj \wedge \deltak \wedge \deltal + P^{ij} \dfrac{\partial P^{kl}}{\partial z^j} \deltai \wedge \deltak \wedge \deltal - P^{kl} \dfrac{\partial P^{ij}}{\partial z^k} \deltal \wedge \deltai \wedge \deltaj \right) \\
                                & +\dfrac{1}{4} \left(P^{kl} \dfrac{\partial P^{ij}}{\partial z^l}  \deltak \wedge \deltai \wedge \deltaj \right)\\
                &=2\left(P^{ih}\partial_h P^{jk} + P^{jh}\partial_h P^{ki}+P^{kh}\partial_h\cdot P^{ij}\right) \deltai\wedge\deltaj\wedge\deltak
\end{align*}
where in the last line we use $i<j<k$.
From here the Lemma follows.
\end{proof}

As in the real case, $P$ defines a linear map $P: \Lambda^{1,0} \rightarrow T^{1,0}$ from the space of $(1,0)$- forms to the space of $(1,0)$ vectors at each point.
For each complex valued function $f$ , denote by $P(d f) = P(\partial f) = X_f$ its Hamiltonian vector field. Then $\{f,g\} := X_f(g) = -X_g(f)$ defines a bracket operation on complex functions, just like in the real case. From Lemma 2.1, it satisfies the Jacobi identity (or equivalently $[X_f,X_g] = X_{\{f,g\}}$ ) iff $P$ is (complex) Poisson. Note that for $f$ and $g$ holomorphic , $\{f,g\}$ is not necessarily holomorphic, so there is no analog of the notion of "symplectic foliation" in this case. The bracket $\{f,g\}$ is holomorphic only when the functions $P^{ij}$ are holomorphic and $P$ is called holomorphic Poisson in this case. As is well known, a holomorphic Poisson structure defines a holomorphic symplectic foliation.

Suppose now that the complex $(2,0)$ bivector $P$ is of maximal rank at each point. Then $P: \Lambda^{1,0} \rightarrow T^{1,0}$ is invertible and $\omega = P^{-1} : T^{1,0} \rightarrow \Lambda^{1,0}$ defines a $(2,0)$ form via $\omega(P(\partial f), P(\partial g))= P(\partial f, \partial g)$. Once more similar to the real case, we have:

\begin{lemma} For nondegenerate complex Poison structure $P$ and $\omega = P^{-1}$ as above
$$\partial\omega(X_f,X_g,X_h) = \frac{3}{2}[P,P](df, dg, dh)$$

\end{lemma}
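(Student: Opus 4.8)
The plan is to reduce $\partial\omega$ evaluated on the Hamiltonian fields to the ordinary exterior derivative and then expand by Cartan's formula, turning every term into an iterated Poisson bracket. Since $X_f=P(\partial f)$ lies in $T^{1,0}$ and $T^{1,0}$ is integrable on a complex manifold, the Lie brackets $[X_f,X_g]$ are again of type $(1,0)$; as $\bar\partial\omega$ is of type $(2,1)$ it annihilates any three $(1,0)$ vectors, so $\partial\omega(X_f,X_g,X_h)=d\omega(X_f,X_g,X_h)$. I would then apply the invariant formula for $d$ on a $2$-form,
\begin{align*}
d\omega(X_f,X_g,X_h)&=X_f\,\omega(X_g,X_h)-X_g\,\omega(X_f,X_h)+X_h\,\omega(X_f,X_g)\\
&\quad-\omega([X_f,X_g],X_h)+\omega([X_f,X_h],X_g)-\omega([X_g,X_h],X_f).
\end{align*}

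The key algebraic input is the contraction identity $\omega(X_f,V)=-V(f)$ for every $(1,0)$ field $V$, i.e. $\iota_{X_f}\omega=-\partial f$. This follows from $\omega(X_f,X_g)=\{f,g\}=-\{g,f\}=-X_g(f)$ together with nondegeneracy, which guarantees that the fields $X_g$ span $T^{1,0}$ at each point. Using it, the three ``derivative'' terms become nested brackets, e.g. $X_f\,\omega(X_g,X_h)=X_f\{g,h\}=\{f,\{g,h\}\}$, while each ``bracket'' term collapses \emph{without} computing $[X_f,X_g]$ explicitly: $\omega([X_f,X_g],X_h)=[X_f,X_g](h)=X_fX_g(h)-X_gX_f(h)=\{f,\{g,h\}\}-\{g,\{f,h\}\}$.

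Collecting the six terms, the whole right-hand side becomes a fixed rational multiple of the cyclic Jacobiator $J(f,g,h)=\{f,\{g,h\}\}+\{g,\{h,f\}\}+\{h,\{f,g\}\}$. It then remains to identify $J$ with $[P,P]$. For this I would use that both $J(f,g,h)$ and $[P,P](df,dg,dh)$ depend only on the pointwise differentials $df,dg,dh$ (the Jacobiator is tensorial, and the right-hand side is manifestly so), so it suffices to test on coordinate functions. For $f=z^i,g=z^j,h=z^k$ one has $X_{z^i}=P^{il}\,\partial/\partial z^l$ and $\{z^i,\{z^j,z^k\}\}=P^{ih}\partial_h P^{jk}$, whence $J(z^i,z^j,z^k)=P^{ih}\partial_h P^{jk}+P^{jh}\partial_h P^{ki}+P^{kh}\partial_h P^{ij}=\tfrac12[P,P](dz^i,dz^j,dz^k)$ by Lemma \ref{L1}. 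Substituting back produces the asserted proportionality between $\partial\omega(X_f,X_g,X_h)$ and $[P,P](df,dg,dh)$ for all $f,g,h$.

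The delicate point, and the step I expect to be the main obstacle, is pinning down the exact numerical factor. This constant is not intrinsic: it is produced jointly by the normalization in Cartan's formula, by the sign in the matrix relation $P^{ia}\omega_{ab}=\mp\delta^i_b$ dual to $\omega=P^{-1}$, and by the factor $2$ already present in Lemma \ref{L1}. The safest way to fix it is the coordinate computation behind the last paragraph: differentiating $P^{ia}\omega_{ab}=\mathrm{const}$ yields $\partial_p\omega_{qr}=\omega_{qa}\omega_{br}\,\partial_pP^{ab}$, and contracting the cyclic expression $\partial_p\omega_{qr}+\partial_q\omega_{rp}+\partial_r\omega_{pq}$ with $P^{ip}P^{jq}P^{kr}$ makes every $\omega$ cancel against a $P$ through $P\cdot\omega=\mp I$, leaving precisely the cyclic sum $P^{ih}\partial_hP^{jk}+\text{cyc}$ up to the universal constant. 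Matching this against the factor in Lemma \ref{L1} delivers the stated identity; note that once this identity is in hand, it immediately gives the equivalence ``$P$ Poisson $\iff$ $\omega$ is $\partial$-closed'' announced in the introduction.
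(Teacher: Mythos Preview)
Your approach is essentially the same as the paper's: reduce $\partial\omega$ to $d\omega$ on $(1,0)$ Hamiltonian fields, expand by Cartan's formula, rewrite every term as an iterated Poisson bracket via $\omega(X_f,X_g)=\{f,g\}$ and $\omega(U,X_h)=\pm U(h)$, and then invoke Lemma~\ref{L1} to match the Jacobiator with $[P,P]$. The paper carries this out in two lines on coordinate functions $z_i,z_j,z_k$, while you add the (correct) remark that both sides are tensorial in $df,dg,dh$, so the coordinate check suffices; your final paragraph proposing to differentiate $P^{ia}\omega_{ab}=\mathrm{const}$ is an alternative bookkeeping device for the constant that the paper does not use, but it is not a genuinely different method.
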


\begin{proof}

\begin{align*}
d\omega (X_f,\,X_g,\,X_h) &= X_f(\omega(X_g,\,X_h)) + X_g(\omega(X_h,X_f)) + X_h(\omega(X_f,X_g))\\
&- \omega([X_f,X_g],X_h) - \omega([X_g,X_h],X_f) - \omega([X_h,X_f],X_g)\\
                &=X_f(\{g,h\}) + X_g(\{h,f\}) + X_h(\{f,g\}) + [X_f,X_g](h) + [X_g,X_h](f)+[X_h,X_f](g)\\
&= 3\left(\{f,\{g,h\}\}+\{g.\{h,f\}\}+\{h,\{f,g\}\}\right)
\end{align*}
Since $X_f,X_g,X_h$ are (1,0) vectors, $d\omega(X_f,X_g,X_h) = \partial\omega(X_f,X_g,X_h)$.
Now applying Lemma \ref{L1} we have
$$[P,P](dz_i,dz_j,dz_k) =  2\left(\{z_i,\{z_j,z_k\}\}+\{z_j,\{z_k,z_i\}\}+\{z_k,\{z_i,z_j\}\}\right)
$$
So
$$d\omega(X_{z_i},X_{z_j}, X_{z_k}) = \frac{3}{2} [P,P](dz_i,dz_j,dz_k)
$$

and the statement follows.
\end{proof}

From here we obtain:

\begin{theorem}\label{L3}
A complex non-degenerate $(2,0)$ bivector is Poisson iff its dual 2-form $\omega=P^{-1}$ is $\partial$-closed, that is, $\partial\omega=0$.

\end{theorem}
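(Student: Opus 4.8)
The plan is to deduce the theorem directly from Lemma 2.2, which already packages all of the computation; the only work remaining is to translate the pointwise identity into a statement about vanishing tensors, and this is precisely where nondegeneracy enters. Throughout, I would keep in mind that both $\partial\omega$ and $[P,P]$ are tensors of type $(3,0)$, so each is completely determined by its values on triples of $(1,0)$-covectors, respectively $(1,0)$-vectors.

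For the forward implication, suppose $P$ is Poisson, i.e. $[P,P]=0$. Then Lemma 2.2 gives $\partial\omega(X_f,X_g,X_h)=\tfrac{3}{2}[P,P](df,dg,dh)=0$ for all complex-valued functions $f,g,h$. First I would observe that, because $P$ has maximal rank, the map $P\colon\Lambda^{1,0}\to T^{1,0}$ is an isomorphism at each point; as $f$ ranges over functions, $\partial f$ ranges over all of $\Lambda^{1,0}$ (for instance $f=\sum a_i z_i$ realizes an arbitrary covector $\sum a_i\,dz^i$ at a given point), so the Hamiltonian fields $X_f=P(\partial f)$ exhaust $T^{1,0}$ at every point. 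Hence $\partial\omega$ vanishes on every triple of $(1,0)$-vectors, and being of type $(3,0)$ it vanishes identically: $\partial\omega=0$.

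For the converse, suppose $\partial\omega=0$. Here I would simply evaluate the identity of Lemma 2.2 on the coordinate functions $z_i,z_j,z_k$, obtaining $0=\partial\omega(X_{z_i},X_{z_j},X_{z_k})=\tfrac{3}{2}[P,P](dz^i,dz^j,dz^k)$ for all $i,j,k$. Since $[P,P]$ is a $(3,0)$ trivector, it is pinned down by these components, so $[P,P]=0$ and $P$ is Poisson.

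The step I expect to require the most care is not a computation but the bookkeeping around nondegeneracy: the identity of Lemma 2.2 holds for any $P$ for which $\omega=P^{-1}$ makes sense, and it is exactly the invertibility of $P$ that lets me pass between ``vanishes on all Hamiltonian triples'' and ``vanishes as a $(3,0)$-form.'' I would therefore state explicitly that a $(3,0)$-tensor is determined by its action on $(1,0)$-arguments, so that checking on the spanning family $\{X_f\}$ (equivalently on the coordinate coframe $\{dz^i\}$) already suffices. With that remark in place, the equivalence follows from Lemma 2.2 with no further calculation.
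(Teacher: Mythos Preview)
Your proposal is correct and follows exactly the approach the paper intends: the paper derives Theorem~\ref{L3} directly from Lemma~2.2 with no further argument, and you have simply spelled out the bookkeeping (using nondegeneracy to pass between spanning by Hamiltonian fields and vanishing of the $(3,0)$-tensors) that the paper leaves implicit.
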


\section{Poisson structures on HKT manifolds and their twistor space}

We begin by establishing some notations in the presence of a metric. Suppose $g$ is a Riemannian metric and $I$ a complex structure on a manifold $M$ such that $\omega(X,Y)=g(IX,Y)$  is the  fundamental form. Let $\sharp: T^* \rightarrow T$ be the isomorphism given by $g(\sharp \alpha, Y)=\alpha(Y)$ and denote by
$\omega$ and $\omega^{-1}$ the maps $\omega:T\rightarrow T^*$ and $\omega^{-1}:T^*\rightarrow T$ given as $\omega(\omega^{-1}(\alpha),Y)=\alpha(Y)$ and $\omega(X) = i_X\omega$. Then $\omega(\omega^{-1})=Id|_{T^*}, \omega^{-1}(\omega)=Id|_{T}$. Thus $\alpha(Y)=\omega(\omega^{-1}(\alpha),Y)=g(I\omega^{-1}(\alpha),Y)=g(\sharp\alpha,Y)$, and $\omega^{-1} = -I\circ \sharp$. Extending $I$ on $T^*$ in a standard way, $\omega^{-1}=\sharp \circ I$. The map $\omega^{-1}$ defines a bivector  $\omega^{-1}\in \Lambda^2T$ as $\omega^{-1}(\alpha, \beta)= \beta(\omega^{-1}(\alpha))$ and it is the same as the map defined by the Poisson bivector $P$ above.

Recall that a hypercomplex structure is a triple $I,J,K$ of complex structures satisfying the quaternionic identities $IJ=-JI=K$.
Let $I,J,K,g$ be  a hyperhermitian structure and $\omega_I,\omega_J,\omega_K$ be the corresponding 2-forms. From above
$$ \sharp = I\omega^{-1}_I  = J\omega^{-1}_J = K\omega^{-1}_K $$ and from here $$I \omega^{-1}_J = \omega^{-1}_K$$
We want to consider the "complexified" version of the Poisson and symplectic structures on a Hermitian manifold. Let $\Omega = \omega_J+i\omega_K$ and denote by $\Omega: T^{1,0}\rightarrow T^{*(1,0)}$ the map given by $\Omega(X^{1,0})= i_{X^{1,0}}\Omega$. Since $\Omega|_{T^{0,1}} = 0$, it can be extended to a map on the whole complexified tangent space. We want to find the real and imaginary parts of $\Omega^{-1}|_{\Lambda^{1,0}}$. First we calculate $\omega_J\omega^{-1}_K$ via  $\omega_J\omega^{-1}_K(\alpha)(Y) = -JK\alpha(Y) = -I\alpha(Y)$. We see that $\omega_J\omega^{-1}_K = - \omega_K\omega^{-1}_J$. Then we have $(\omega_J+i\omega_K)(\omega^{-1}_J-i\omega^{-1}_K) (\alpha) = 2\alpha+2iI\alpha = 2\alpha^{1,0}$. Thus, on $\Lambda^{1,0}$ we have $\Omega^{-1}= \frac{1}{4}(\omega^{-1}_J-i\omega^{-1}_K)$ and $\omega^{-1}_J-i\omega^{-1}_K |_{\Lambda^{0,1}} = 0$, so the complex bivector $P$ given by

\begin{equation}\label{def1}
P(\partial f, \partial g) = \Omega(\Omega^{-1}(\partial f), \Omega^{-1}(\partial g))
\end{equation}
satisfies $P=\frac{1}{4}(\omega^{-1}_J-i\omega^{-1}_K)$, where $\partial$ is the operator defined by $I$.

 A hyperhermitian metric $g$ is called HKT if $d^c_I\omega_I=d^c_J\omega_J=d^c_K\omega_K$  where $d^c_I,d^c_j,d^c_K$ are the imaginary parts of the $\partial$ operators for $I,J,K$. The definition is equivalent to the existence of a hyperhermitian connection with skew-symmetric torsion, which is the original one proposed by \cite{HP}. In \cite{GP}, it is shown that the HKT condition is equivalent to: $$
\partial(\omega_J+i\omega_K)=\partial\Omega=0$$
As a direct consequence of Theorem \ref{L3} we obtain:

\begin{theorem}\label{L1}
Let $(M,I,J,K,g)$ be a hyperhermitian manifold and $P$ as in (\ref{def1}). Then $g$ is HKT iff $P$ is complex Poisson.

\end{theorem}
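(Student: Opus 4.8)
The plan is to deduce the statement directly from Theorem \ref{L3} together with the characterization of the HKT condition recalled just above, namely that $g$ is HKT iff $\partial\Omega = \partial(\omega_J + i\omega_K) = 0$ (\cite{GP}), where $\partial$ is the operator associated to $I$. By (\ref{def1}) the bivector $P$ is, tautologically, the bivector dual to the $(2,0)$ form $\Omega = \omega_J + i\omega_K$, so in the notation of Section 2 its inverse is $P^{-1} = \Omega$. Hence, once I know $P$ is nondegenerate, Theorem \ref{L3} converts the Poisson condition $[P,P]=0$ into $\partial\Omega = 0$, and this is precisely the HKT condition.

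First I would verify the hypothesis of Theorem \ref{L3}, that $P$ is a nondegenerate complex $(2,0)$ bivector. This follows from positive-definiteness of the hyperhermitian metric $g$: the computation preceding the statement gives $(\omega_J + i\omega_K)(\omega_J^{-1} - i\omega_K^{-1})(\alpha) = 2\alpha^{1,0}$, which exhibits $\Omega$ as an isomorphism $T^{1,0} \to \Lambda^{1,0}$ at each point with $\Omega^{-1} = \tfrac14(\omega_J^{-1} - i\omega_K^{-1})$ on $\Lambda^{1,0}$. Consequently $P = \Omega^{-1}$ is of maximal rank everywhere and $P^{-1} = \Omega$, so Theorem \ref{L3} applies.

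Next I would chain the two equivalences. Theorem \ref{L3} gives that $P$ is complex Poisson iff its dual form $P^{-1} = \Omega$ satisfies $\partial\Omega = 0$. The result of \cite{GP} gives that $g$ is HKT iff $\partial\Omega = 0$. Combining, $g$ is HKT $\iff \partial\Omega = 0 \iff P$ is complex Poisson, which is the assertion.

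The only point requiring care, rather than a genuine obstacle, is bookkeeping of conventions: I must confirm that the operator $\partial$ appearing in Theorem \ref{L3} (the one determined by the complex structure $I$ on $M$) is the same $\partial$ used in the \cite{GP} characterization, so that the two occurrences of "$\partial\Omega = 0$" literally coincide. One may also note that the constant $\tfrac14$ relating $P$ to $\omega_J^{-1} - i\omega_K^{-1}$ is irrelevant to the Poisson condition, since $[P,P]$ is quadratic in $P$ and a nonzero scalar multiple of a Poisson bivector is again Poisson. With these compatibilities checked, the theorem is an immediate corollary of Theorem \ref{L3}.
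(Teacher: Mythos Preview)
Your proposal is correct and matches the paper's approach exactly: the paper states this theorem as ``a direct consequence of Theorem \ref{L3}'' together with the characterization $\partial\Omega=0$ from \cite{GP}, and you have supplied precisely that argument with the appropriate check of nondegeneracy.
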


Let $Z = M\times S^2$ be endowed with the "tautological" complex structure $\mathcal{J}$ defined as $\mathcal{I}_{(x,{\bf a})} = (I_{\bf a}, I_{S^2})$ where for ${\bf a} = (a,b,c)$, $I_{\bf a} = aI + b J + cK$. In these terms, $I_{S^2}$ is the canonical complex structure on $S^2\cong {\mathbb CP}^1$. It is well known that the structure $\mathcal{I}$ is integrable and the complex manifold $(Z,\mathcal{I})$ is called the {\it twistor space} of $(M, I, J, K)$. The identification of  $S^2$ with ${\mathbb C}P^1$ is given by the stereographic projection:
$$St:\lambda\rightarrow {\bf a} = \left (\frac{|\lambda|^2-1}{1+|\lambda|^2}, \frac{i(\overline{\lambda}-\lambda)}{1+|\lambda|^2}, \frac{-(\lambda+\overline{\lambda})}{1+|\lambda|^2} \right)\in S^2.$$
If $\lambda$ corresponds to ${\bf a} = (a,b,c)$ via the inverse map $\lambda = St^{-1}(a,b,c)$, then $\lambda = \infty$ corresponds to $I_{(1,0,0)} = I$, $\lambda = i$ corresponds to $I_{(0,1,0)} = J$, and $\lambda = -1$ corresponds to $I_{(0,0,1)} = K$.

\begin{rem}\label{L1gen}
The integrability of $\mathcal{I}$ is equivalent to the fact that any structure $I_{\bf a}$ in the $S^2$-family $aI+bJ+cK$, for ${\bf a}=(a,b,c)\in S^2$, is integrable. Moreover for a hyperhermitian metric $g$, $I_{\bf a}$ is Hermitian. Therefore, instead of using $I,J,K$ in Theorem \ref{L1} we may as well use $I_{\bf{a}}, I_{\bf{b}},I_{\bf{c}}$ for $\bf{a},\bf{b},\bf{c}$ an orthonormal triple in $S^2$ with $\bf{c}=\bf{a}\times\bf{b}$.
\end{rem}

Suppose now that $g$ is a hyperhermitian metric on $M$ and $(Z_1, W_1, Z_2, W_2,..., Z_n, W_n)$ is a unitary basis of the space $T_x^{1,0}(M)$ of $(1,0)$-vectors for the structure $I$ at a point $x$  such that $J(Z_i)=\overline{W_i}, J(W_i) = -\overline{Z_i}$. We call such basis {\it quaternionic - Hermitian}.  For the structure $I_{\bf a}$ via the stereographic projection, the vectors:
$$ Z_i^{\lambda} = \frac{1}{\sqrt{1+|\lambda|^2}}(\overline{\lambda}Z_i  - \overline{W_i}), W_i^{\lambda} = \frac{1}{\sqrt{1+|\lambda|^2}}(\overline{\lambda}W_i  + \overline{Z_i})$$
form a unitary basis for $(g,I_{\bf a})$. Let $(\delta_i, \sigma_i),  i=1,...,n$ be the dual quaternionic-Hermitian basis of the cotangent $(1,0)$-bundle at $x$ of $(Z_i^{\lambda}, W_i^{\lambda})$. Then it is  is given by $$\sigma_i^{\lambda} = \frac{1}{\sqrt{1+|\lambda|^2}}(\lambda \sigma_i  - \overline{\delta_i}), \delta_i^{\lambda} = \frac{1}{\sqrt{1+|\lambda|^2}}(\lambda \delta_i  + \overline{\sigma_i})$$
In these terms we have that $F_{\bf a} = aF_I + bF_J  + cF_K$ is also given by $F_{\bf a} = i/2\sum (\sigma_i^{
\lambda}\wedge\overline{\sigma_i^{\lambda}} + \delta_i^{
\lambda}\wedge\overline{\delta_i^{\lambda}})$ and the dual 2-vector is given by $P_{\bf a} = -2i\sum(Z_i^{
\lambda}\wedge\overline{Z_i^{\lambda}} + W_i^{
\lambda}\wedge\overline{W_i^{\lambda}})$. We want to calculate the $(2,0)$-part $P_I|^{(2,0)}_{\bf a}$ of
$$P_I = -2i\sum(Z_i\wedge\overline{Z_i} + W_i\wedge\overline{W_i})
$$
with respect to $I_{\bf a}$. First, we see that
$$Z_i =  \frac{1}{\sqrt{1+|\lambda|^2}}(\lambda Z_i^{\lambda}  + \overline{W_i^{\lambda}}), W_i = \frac{1}{\sqrt{1+|\lambda|^2}}(\lambda W_i^{\lambda}  - \overline{Z_i^{\lambda}})$$ and similar expressions hold for the conjugates $\overline{Z_i}, \overline{W_i}$.

Now we have
$$ P_I = \frac{-2i}{1+|\lambda|^2}\sum (\lambda Z_i^{\lambda}  + \overline{W_i^{\lambda}})\wedge(\overline{\lambda} \overline{Z_i^{\lambda}} + W_i^{\lambda})  + (\lambda W_i^{\lambda}  - \overline{Z_i^{\lambda}})\wedge(\overline{\lambda}\overline{ W_i^{\lambda}}  -  Z_i^{\lambda})
$$
$$ = \frac{-2i}{1+|\lambda|^2}\sum (|\lambda|^2 - 1)(Z_i^{
\lambda}\wedge\overline{Z_i^{\lambda}} + W_i^{
\lambda}\wedge\overline{W_i^{\lambda}}) + 2\lambda Z_i^{\lambda}\wedge W_i^{\lambda} - 2\overline{\lambda}\overline{Z_i^{\lambda}}\wedge \overline{W_i^{\lambda}}
$$

So $$P_I|^{(2,0)}_{\bf a} = \frac{-4i\lambda}{1+|\lambda|^2}Z_i^{\lambda}\wedge W_i^{\lambda}$$

We can also consider ${\bf b} = \frac{1}{1+|\lambda|^2}(i(\overline{\lambda}-\lambda), 1+\frac{1}{2}(\lambda^2+\overline{\lambda}^2), \frac{i}{2}(\lambda^2-\overline{\lambda}^2))$ and ${\bf c} = \frac{1}{1+|\lambda|^2}(-(\overline{\lambda}+\lambda), \frac{i}{2}(\lambda^2-\overline{\lambda}^2), -1+\frac{1}{2}(\lambda^2+\overline{\lambda}^2) )$ and notice that $F_{\bf b} + iF_{\bf c} = \sum_i \sigma_i^{\lambda}\wedge\delta_i^{\lambda}$ as well as ${\bf c} = {\bf a}\times{\bf b}$
with ${\bf b}$ orthogonal to ${\bf a}$. Specifically, the matrix $A = ({\bf a}, {\bf b}, {\bf c})$ is a special orthogonal matrix whose inverse is its transpose. Then for the corresponding structures $I_{\bf b}$ and $I_{\bf a}$ we can find that $P_{\bf b} + iP_{\bf c} = \sum Z_i^{\lambda}\wedge W_i^{\lambda}$. Thus we have the following:

\begin{equation}\label{cxPoisson}
P_I|^{(2,0)}_{\bf a} = \frac{-4i\lambda}{1+|\lambda|^2}(P_{\bf b} +iP_{\bf c})
\end{equation}

Denote also by $\pi: Z\rightarrow S^2$ and $\pi_1:Z\rightarrow M$ the two projections - $\pi$ is holomorphic , but $\pi_1$ is not. Let also $s_{\bf a}:M\rightarrow Z$ be the section of $\pi$ defined as $s_{\bf a}(x) = (x,{\bf a})$ for every $x$. Then we can formulate the following:

\begin{theorem}\label{L4}
Let $g$ be a hyperhermitian metric on $M$ and $P_I$ be the bivector dual to the fundamental form for $I$. Let $\mathcal{P}_{(x,{\bf a})} = ((s_{\bf a})_*P_I)^{(2,0)}$ be the bivector on $Z$ defined  as the $(2,0)$ component of $(s_{\bf a})_*(P_I)$ with respect to $\mathcal{I}$.  Then $g$ is HKT iff $\mathcal{P}$ is complex Poisson.
\end{theorem}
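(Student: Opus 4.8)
The plan is to show that the vanishing of the Schouten bracket $[\mathcal{P},\mathcal{P}]$ on the twistor space $(Z,\mathcal{I})$ is equivalent to a condition on each fiber of the holomorphic projection $\pi\colon Z\to S^2$, and then to recognize that fiberwise condition as the HKT condition by means of Theorem \ref{L1} and Remark \ref{L1gen}. The starting observation is that $\mathcal{P}=((s_{\bf a})_*P_I)^{(2,0)}$ is \emph{vertical}: since $s_{\bf a}(x)=(x,{\bf a})$, the pushforward $(s_{\bf a})_*P_I$ lies in $\Lambda^2$ of the tangent space to the fiber $M\times\{{\bf a}\}=\pi^{-1}({\bf a})$, and taking the $(2,0)$-part with respect to $\mathcal{I}$ preserves this, because $\mathcal{I}$ restricts to $I_{\bf a}$ on the vertical distribution. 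First I would choose, around any point, local holomorphic coordinates $(w^1,\dots,w^{2n},\zeta)$ on $Z$ adapted to the holomorphic submersion $\pi$, so that $\zeta$ is pulled back from $S^2$ and the $w^a$ restrict to holomorphic coordinates on each fiber $(M,I_{\bf a})$. In such coordinates verticality reads $\mathcal{P}=\frac12\mathcal{P}^{ab}\,\partial_{w^a}\wedge\partial_{w^b}$, with no $\partial_\zeta$ leg, i.e.\ $\mathcal{P}^{a\zeta}=0$.

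Next I would apply the coordinate formula for the Schouten bracket of a $(2,0)$ bivector established in Section 2. That formula contracts the first index of the bivector against a holomorphic coordinate derivative; because $\mathcal{P}^{a\zeta}=0$, the only derivatives occurring are the $\partial_{w^d}$, and every component of $[\mathcal{P},\mathcal{P}]$ carrying a $d\zeta$ vanishes identically. Thus, although the coefficients $\mathcal{P}^{ab}$ depend on $\zeta$ (this dependence is exactly what encodes the nontrivial complex structure of the twistor space), that dependence is never differentiated, and $[\mathcal{P},\mathcal{P}]$ coincides with the Schouten bracket of the restriction $\mathcal{P}|_{\pi^{-1}({\bf a})}$ computed inside the fiber, with $\zeta$ playing the role of a spectator parameter. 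Hence $[\mathcal{P},\mathcal{P}]=0$ on $Z$ if and only if, for every ${\bf a}\in S^2$, the restriction $\mathcal{P}|_{\pi^{-1}({\bf a})}$ is complex Poisson on $(M,I_{\bf a})$; by construction this restriction is precisely $P_I|^{(2,0)}_{\bf a}$.

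At this point I would invoke the explicit identity (\ref{cxPoisson}), namely $P_I|^{(2,0)}_{\bf a}=\frac{-4i\lambda}{1+|\lambda|^2}(P_{\bf b}+iP_{\bf c})$. The scalar $\frac{-4i\lambda}{1+|\lambda|^2}$ depends only on $\lambda$, hence is constant along each fiber, so it factors quadratically out of the fiberwise Schouten bracket and, away from the two points $\lambda=0,\infty$ where it vanishes, the fiberwise Poisson condition is equivalent to $[P_{\bf b}+iP_{\bf c},\,P_{\bf b}+iP_{\bf c}]=0$. Now $P_{\bf b}+iP_{\bf c}$ agrees, up to a nonzero constant and possibly complex conjugation, with the complex Poisson bivector attached by (\ref{def1}) to the hyperhermitian triple $(I_{\bf a},I_{\bf b},I_{\bf c})$; since $P_{\bf b}$ and $P_{\bf c}$ are real bivectors, the Schouten bracket of $P_{\bf b}+iP_{\bf c}$ vanishes exactly when that of its conjugate does, so the distinction is immaterial. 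By Remark \ref{L1gen} the HKT condition does not depend on the chosen orthonormal triple, and Theorem \ref{L1} therefore identifies the vanishing of this bracket, for one (equivalently, every) admissible ${\bf a}$, with $g$ being HKT. Reading the chain of equivalences in both directions — and using continuity, or simply one value of $\lambda$ with nonvanishing scalar, for the direction that starts from $[\mathcal{P},\mathcal{P}]=0$ — gives the stated result.

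I expect the decisive step to be the reduction in the first two paragraphs: one must be sure that, despite the genuinely $\lambda$-dependent complex structure $I_{\bf a}$ that makes $Z$ a nontrivial complex manifold rather than a product, the transverse ($d\zeta$) contributions to $[\mathcal{P},\mathcal{P}]$ really do drop out. Working in $\pi$-adapted holomorphic coordinates and exploiting $\mathcal{P}^{a\zeta}=0$ makes this transparent, after which everything is bookkeeping. The only remaining wrinkle, the degeneracy of the scalar factor at $\lambda=0,\infty$ (where $I_{\bf a}=\pm I$), is harmless and is dispatched by continuity.
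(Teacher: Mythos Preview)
Your argument is correct and follows the same route as the paper: reduce $[\mathcal{P},\mathcal{P}]$ on $Z$ to the fiberwise bracket of $P_I|^{(2,0)}_{\bf a}$, invoke (\ref{cxPoisson}), apply Theorem \ref{L1} with Remark \ref{L1gen}, and close the two degenerate fibers by continuity. The one genuine difference is that where the paper simply asserts $[(s_{\bf a})_*P_I|^{(2,0)}_{\bf a},(s_{\bf a})_*P_I|^{(2,0)}_{\bf a}]=(s_{\bf a})_*[P_I|^{(2,0)}_{\bf a},P_I|^{(2,0)}_{\bf a}]$, you justify this reduction by working in $\pi$-adapted holomorphic coordinates and observing that $\mathcal{P}^{a\zeta}=0$ kills the transverse derivatives in the Schouten formula; this is a welcome clarification, since pushforward along an embedding does not commute with the bracket in general and the $\lambda$-dependence of $\mathcal{P}$ could in principle contribute.
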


{\it Proof:}
  First we notice that $$((s_{\bf a})_*P)^{(2,0)} = ((s_{\bf a})_*(P_I|^{(2,0)}_{\bf a})$$ where $P_I|^{(2,0)}_{\bf a}$ is as above the $(2,0)$ part of $P_I$ with respect to $I_{\bf a}$. This follows from the definition of $\mathcal{I}$ at a point $(x,{\bf a})$. From (\ref{cxPoisson}) we have  $P_I|^{(2,0)}_{\bf a} = \frac{-4i\lambda}{1+|\lambda|^2} (P_{\bf b} + iP_{\bf c})$. Then Theorem \ref{L1} and Remark \ref{L1gen} give that $[P_{\bf b} + iP_{\bf c},P_{\bf b} + iP_{\bf c}] = 0$ iff $g$ is HKT. We notice that $[(s_{\bf a})_*P|^{(2,0)}_{\bf a}, (s_{\bf a})_*P|^{(2,0)}_{\bf a}] = (s_{\bf a})_*[P|^{(2,0)}_{\bf a},P|^{(2,0)}_{\bf a}]$. Therefore, the theorem follows because $\frac{-4i\lambda}{1+|\lambda|^2}\neq 0$ for almost all ${\bf a}$, and by continuity $[\mathcal{P},\mathcal{P}]$ vanishes everywhere.

{\it Q.E.D.}

 \begin{rem} The statement is dual to the characterization of the HKT structure by Banos and Swann (\cite{BS}) in terms of the twistor space: If $F$ is the fundamental form for the Hermitian structure  $(g,I)$ then $g$ is HKT iff $\partial(\pi^*F)^{(2,0)} = 0$ where $\partial$ is the operator with respect to ${\mathcal I}$ on $Z$. Banos and Swann used it to prove that every HKT structure has a local HKT-potential. In \cite{GP}, there was a slightly different twistor characterization of the HKT condition, which is based the characterization of the hyperk\"ahler structures in \cite{HKLR}.
\end{rem}

 {\it Examples.} We explicitly describe the complex Poisson structures on the groups $SU(3)$ and $SU(5)$ corresponding to the HKT structures for the Joyce's invariant hypercomplex structures.

For a compact Lie group which is a product of a simple group and a torus of appropriate dimension, the construction is given in \cite{SSTV} and \cite{Joyce2}. It could be extended to some homogeneous spaces as well. Recently it was proven in \cite{VaskoGosho} that all invariant hypercomplex structures arise in this way, and in \cite{BGP} it was proven under the additional restriction of compatibility with the bi-invariant metric. We present here a short description following \cite{VaskoGosho}. For a compact Lie algebra $\overline{\mathfrak{g}}$ with semisimple part $\mathfrak{g}$, fix a Cartan subalgebra $\mathfrak{h}$ of $\mathfrak{g}^{\mathbb{C}}$. Choose a set of positive roots $R$ and a basis $\Pi$ of $R$.
The following definition is from \cite{VaskoGosho}:
\begin{definition}
For any $\gamma\in R$ define
$$\Phi_{\gamma} = \{\alpha\in R| \gamma-\alpha\in R \}.$$
A subset $\Gamma\subset R$ is called a stem of $R$ if
$$R = \Gamma\cup\bigcup_{\gamma\in\Gamma}\Phi_{\gamma}$$ as disjoint union.

\end{definition}

 The sets $\Phi_{\gamma}$ define a maximal strongly orthogonal set of root subsystems of $R$. The hypercomplex structure is defined in the following way. Let $\Gamma=(\theta_1,\theta_2,...,\theta_n)$ with $\theta_1$ being the highest root. First the complex structure $I$ acts as $+i$ on the positive roots and as $-i$ on the negative roots. The Cartan subalgebra $\mathfrak{h}$, possibly enhanced by an abelian ideal in $\overline{\mathfrak{g}}$, is divided in two: the space $ H = \text{Span}(H_{\theta_i})$ and orthogonal complement $H^{\bot}=\text{Span}(H^{\bot}_i)$ of the same dimension. Then $I$ interchanges $H_{\theta_i}$ and $H^{\bot}_i$. For any positive root $\alpha\in\Phi_k$ and appropriate choice of $H^{\bot}_k$, the structure $J$ acts as $J(H_{\theta_k}-iH^{\bot}_k) = E_{-\theta_k}$ and $J(E_{\theta_k-\alpha}) = c_{\alpha,\theta_k}E_{-\alpha}$  for some constant $c_{\alpha}$. We also normalize the root vectors and the elements of the Cartan subalgebra according to the Killing form. We explicitly describe the construction in the case where $\mathfrak{g}^{\mathbb{C}} = sl(2n+1, \mathbb{C}), n=1,2$.

If $E_{i,j}$ are the matrices with entry $1$ at the $(i,j)$-th place, then $[E_{i,j},E_{j,k}] =E_{i,k} = -[E_{j,k},E_{i,j}]$ and all other brackets vanish. For $n=1$, there is only one element  $\theta_1$ in the stem and $H_{\theta_1} = i(E_{1,1}-E_{3,3})$ up to a constant. Choose $H_1^{\bot}=ia(E_{1,1}+E_{3,3}-2E_{2,2})$ so the structure $I$ is defined as
$$I(E_{i,j}) = iE_{i,j}, i<j$$
$$I(E_{i,j}) = -iE_{i,j}, i>j$$
$$I(ia(E_{1,1}+E_{3,3}-2E_{2,2})) = i(E_{1,1}-E_{3,3})$$
 Here for compatibility with the biinvariant metric we should take $a=\frac{1}{\sqrt{3}}$ so that all vectors will have length $\sqrt{2}$.  For $b=1+ia$ we have:
$I(bE_{1,1}-\overline{b}E_{3,3}-(b-\overline{b})E_{2,2}) = i((bE_{1,1}-\overline{b}E_{3,3}-(b-\overline{b})E_{2,2})$.
Note that
$$[(bE_{1,1}-\overline{b}E_{3,3}-(b-\overline{b})E_{2,2})\wedge E_{1,3}, E_{1,2}\wedge E_{2,3}] =  2E_{1,2}\wedge E_{1,3}\wedge E_{2,3}$$
and
$$[E_{1,2}\wedge E_{2,3},E_{1,2}\wedge E_{2,3}] = -2E_{1,2}\wedge E_{1,3}\wedge E_{2,3}$$
A unitary basis of vectors of length $2$ in the biinvariant metric is $(i/\sqrt{3}(E_{1,1}+E_{3,3}-2E_{2,2}) + (E_{1,1}-E_{3,3}), 2 E_{i,j})$. If we define $J(i /\sqrt{3}(E_{1,1}+E_{3,3}-2E_{2,2})+ (E_{1,1}-E_{3,3}) ) = 2 E_{3,1}$ and
$J(E_{1,2}) = E_{3,2}$ the complex bivector $P$ is given by,
$$2P= (bE_{1,1}-\overline{b}E_{3,3}-(b-\overline{b})E_{2,2})\wedge E_{1,3} + 2 E_{1,2}\wedge E_{2,3}$$ where $b= 1+\frac{i}{\sqrt{3}}$.
From the calculation above, $$[P,P] = 0$$ is valid for every $b = 1+ia$. In particular, we obtain a 1-parameter family of left invariant HKT structures. One element of this family is given by the biinvariant metric and $a=\frac{1}{\sqrt{3}}$.

When $a=0$ and $b=1$ the structure is not complex Poisson, but has another interpretation. If we consider the complex Lie group $SL(3,\mathbb{C})$ then $E_{i,j}, i\neq j$ and $E_{i,i}-E_{j,j}$ are holomorphic vector fields on it with respect to its canonical complex structure. From the same calculations it follows that  $P = (E_{1,1} - E_{3,3})\wedge E_{1,3} + 2 E_{1,2}\wedge E_{2,3}$ is a holomorphic Poisson bivector on $SL(3,\mathbb{C})$. Its symplectic foliation has leaves with tangent spaces spanned by $(E_{1,1} - E_{3,3}), E_{1,3}, E_{1,2}, E_{2,3}$, Moreover, the leaf at $E_{1,3}$ could be identified with the nilpotent orbit of $E_{1,3}$. On this leaf, $P$ is dual to the canonical holomorphic symplectic form defined by the Killing form on it.

The expression for $SU(5)$ is similar. Use the basis (see \cite{VaskoGosho}) $\{ E_{i,j}, i\neq j, H_{\theta_1}=i(E_{1,1}-E_{5,5}), H_{\theta_2} = i(E_{2,2}-E_{4,4}), H_1^{\bot} = ia(E_{1,1}+E_{5,5}-2E_{3,3}), H_2^{\bot}=ia(E_{2,2}+E_{4,4}-2E_{3,3}) \}$ and define the structures $I$ and $J$ in the same way. Let
$$ P_1 = (bE_{1,1}-\overline{b}E_{5,5}-(b-\overline{b})E_{3,3})\wedge E_{1,5}+2E_{1,2}\wedge E_{2,5}+2E_{1,3}\wedge E_{3,5}+2E_{1,4}\wedge E_{4,5}
$$ and

$$P_2 = (bE_{2,2}-\overline{b}E_{4,4}-(b-\overline{b})E_{3,3})\wedge E_{2,4}+2E_{2,3}\wedge E_{3,4}$$

Straightforward calculations give $[P_1.P_1]=[P_2,P_2]=[P_1,P_2]=0$, so $P = P_1+P_2$ provides a non-degenerate complex Poisson bivector and hence an HKT structure on $SU(5)$. Also, $P_2$ is the complex Poisson structure on $SU(3)$ "centrally" embedded in $SU(5)$ from above, while $P_1$ corresponds to the HKT structure induced on the homogeneous space $SU(5)/SU(3)$.

\section{Poisson structures on the twistor space of a flat hyperk\"ahler space}
In this section we consider in detail the flat hyperk\"ahler case and see that the resulting structure on the twistor space is holomorphic Poisson. This is due to the fact that the twistor space has many commuting holomorphic vector fields. We use the set up and notations from \cite{GPP}. Choose linear coordinates $(z_1^a, z_2^a), a=1, \dots, m,$
  for $\mathbb{C}^{2m}=\mathbb{R}^{4m}$, related to the real coordinates by
$$
  z_1^a=x_{2a-1}+ix_{2a},
  \hspace{.2in}
  z_2^a=y_{2a-1}+iy_{2a}
$$

  The twistor space $Z=Z(\mathbb{R}^{4m})$ of $\mathbb{R}^{4m}$ is the bundle
  $\mathbb{C}^{2m}\otimes \mathcal{O}(1)$ on $\mathbb{C}P^1$
  \cite[Example 13.64 and Example 13.66]{Bes}.
On  $\mathbb{C}P^1$, the homogeneous coordinates are given as $[\lambda_1,\lambda_2]$ and on the open $U_i$ given by $\lambda_i\neq 0$, i=1,2, hence we have local coordinates $\lambda = \frac{\lambda_1}{\lambda_2}$ and $\tilde{\lambda}=\frac{\lambda_2}{\lambda_1}$ respectively.

On $\R^{4m} \times U_2$, the product coordinates $\{z_1^a, z_2^a, \lambda\}$ are not holomorphic.
  The holomorphic coordinates are
  \begin{equation}\label{holomorphic coordinate}
  w_1^a=\lambda z_1^a-\overline{z}^a_2,
  \hspace{.2in}
  w_2^a=\lambda z_2^a+\overline{z}^a_1,
  \hspace{.2in}
  \zeta=\lambda.
  \end{equation}
  The inverse coordinate change is
  \begin{equation}
  z_1^a=\frac{1}{1+|\zeta |^2}\left( {\overline\zeta} w_1^a+\overline{w}^a_2\right),
  \hspace{.2in}
  z_2^a=\frac{1}{1+|\zeta |^2}\left( -\overline{w}^a_1+{\overline\zeta} w_2^a\right),
  \hspace{.2in}
  \lambda=\zeta.
  \end{equation}
  In particular,
  \begin{equation}
  \frac{\partial}{\partial w_1^a}=\frac{\overline{\lambda}}{1+|\lambda|^2}\frac{\partial}{\partial z_1^a} - \frac{1}{1+|\lambda |^2}\frac{\partial}{\partial\overline{z}_2^a},
  \hspace{.2in}
   \frac{\partial}{\partial w_2^a}=\frac{\overline{\lambda}}{1+|\lambda|^2}\frac{\partial}{\partial z_2^a} + \frac{1}{1+|\lambda |^2}\frac{\partial}{\partial\overline{z}_1^a}
  \end{equation}
  are local holomorphic vector fields on $Z$ defined whenever $\lambda\neq\infty$. We notice also that
\begin{equation}
\frac{\partial}{\partial\overline{\zeta}}= \sum_a \left( -\frac{\overline{z_2^a}}{(1+|\lambda|^2)^2}(\frac{\partial}{\partial z_1^a}+\lambda\frac{\partial}{\partial\overline{z}_2^a}) - \frac{\overline{z_1^a}}{(1+|\lambda|^2)^2}(\frac{\partial}{\partial z_2^a}-\lambda\frac{\partial}{\partial\overline{z}_1^a})\right) + \frac{\partial}{\partial\overline{\lambda}}
\end{equation}
Then $\frac{\partial}{\partial\overline{\lambda}} = \frac{\partial}{\partial\overline{\zeta}} + \sum_e\left( \overline{z}_2^a\frac{\partial}{\partial\overline{w}_2^a} + \overline{z}_1^a\frac{\partial}{\partial\overline{w}_1^a}\right)$ and we see that $\frac{\partial}{\partial\lambda}$ is a smooth $(1,0)$-vector field but not holomorphic on $Z$.

  When one changes coordinates from $\lambda_2\neq 0$ to
  $\lambda_1\neq 0$, $\lambda{\tilde w}^a_j=w^a_j.$
  Therefore,
  \begin{equation}\label{vaj}
  V^a_j =\frac{1}{\lambda_2}\frac{\partial}{\partial w^a_j}
  =\frac{1}{\lambda_1}\frac{\partial}{\partial {\tilde w}^a_j}
  \end{equation}
  are globally defined as vector fields on $\mathbb{R}^{4m}\times\{\mathbb{C}^2 -(0,0)\}$.
  The vector fields on $Z$
   \begin{equation}\label{twisted fields}
  W_k^a=\frac{1}{2}\left(I_k\frac{\partial}{\partial x_{2a-1}}
      -iI_{\bf{a}}I_k\frac{\partial}{\partial x_{2a-1}}\right).
  \end{equation}
are well defined at a point $(x,\bf{a}) \in Z$.  These vector fields can also be identified as
  \begin{eqnarray}
  W_0^a=\lambda_1V_1^a+\lambda_2V_2^a,
  & &
  W_1^a=i(\lambda_1V_1^a-\lambda_2V_2^a),
  \nonumber\\
  W_2^a=\lambda_1V_2^a-\lambda_2V_1^a,
   & &
  W_3^a=i(\lambda_1V_2^a+\lambda_2V_1^a).
  \end{eqnarray}

Clearly, $W_i^a$ are global holomorphic vector fields on $Z$ which also commute. In particular if $W=span\{W_k^a\}$, then any nonzero element in $\Lambda^2(W)$ is a holomorphic Poisson structure on $Z$. In particular one can see that $$\lambda_1^2\sum V_1^a\wedge V_2^a, \lambda_1\lambda_2\sum V_1^a\wedge V_2^a, \lambda_2^2\sum V_1^a\wedge V_2^a$$
define holomorphic Poisson structures on $Z$. Moreover we see that the bivector $\mathcal{P}$ from Theorem \ref{L4} is given by $\mathcal{P}=\lambda_1\lambda_2\sum V_1^a\wedge V_2^a$. Finally we notice that all vector and bivector fields descend to the quotients of $\mathbb{R}^{4m}$ by a commutative lattice induced by translations. So they are globally defined also on the torus and its twistor space. As a result of this discussion we obtain:

\begin{theorem}\label{flatcase}
Let $M=T^{4m}$ be endowed with its  flat hyperk\"ahler structure and $\mathcal{P}=\mathcal{P}_{(x,{\bf a})}$ be the bivector defined in (\ref{cxPoisson}) on its twistor space $Z$. Then $\mathcal{P}, \frac{1}{\lambda}\mathcal{P}, \lambda\mathcal{P}$ are globally defined holomorphic Poisson structures on $Z$.

\end{theorem}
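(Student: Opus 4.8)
The statement to be proven strengthens Theorem~\ref{L4}: there $\mathcal P$ is only shown to be \emph{complex} Poisson, whereas here, in the flat case, the claim is that $\mathcal P$ and its two multiples $\lambda^{\pm1}\mathcal P$ are \emph{holomorphic} Poisson and are genuinely global over $\CP^1$. The plan is to rewrite these bivectors in the global holomorphic frame furnished by the commuting holomorphic vector fields $W_k^a$ of \eqref{twisted fields}; once this is done, holomorphicity, global definedness, and the vanishing of the Schouten bracket all follow formally. The real content is therefore the passage to this frame, not a fresh verification of the Poisson identity.

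First I would express $\mathcal P$ in holomorphic coordinates. Using \eqref{cxPoisson} together with the holomorphic coordinates \eqref{holomorphic coordinate} and the fields $V_j^a$ of \eqref{vaj}, one identifies
\[
\mathcal P=\lambda_1\lambda_2\sum_a V_1^a\wedge V_2^a ,
\]
as recorded in the discussion preceding the theorem. Because $\lambda=\lambda_1/\lambda_2$, this gives
\[
\lambda\,\mathcal P=\lambda_1^2\sum_a V_1^a\wedge V_2^a,
\qquad
\tfrac1\lambda\,\mathcal P=\lambda_2^2\sum_a V_1^a\wedge V_2^a .
\]
Thus the three bivectors are $\sum_a V_1^a\wedge V_2^a$ multiplied by the three degree-two monomials $\lambda_1^2,\lambda_1\lambda_2,\lambda_2^2$. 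This is the correct bookkeeping: $V_j^a$ is homogeneous of degree $-1$ in $(\lambda_1,\lambda_2)$, so $\sum_a V_1^a\wedge V_2^a$ is a section of an $\mathcal O(-2)$-twisted bivector bundle with a double pole at $\lambda=\infty$, and the monomials $\lambda_1^2,\lambda_1\lambda_2,\lambda_2^2\in H^0(\CP^1,\mathcal O(2))$ are precisely the factors that cancel this pole and produce honest global bivectors.

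Next I would exhibit each of the three as a \emph{constant}-coefficient combination of wedges of the $W_k^a$. From the expressions of $W_k^a$ in terms of $V_j^a$ one computes, for example,
\[
\sum_a W_0^a\wedge W_1^a=-2i\,\lambda_1\lambda_2\sum_a V_1^a\wedge V_2^a=-2i\,\mathcal P ,
\]
so that $\mathcal P=\tfrac i2\sum_a W_0^a\wedge W_1^a$; entirely analogous identities write $\lambda_1^2\sum_a V_1^a\wedge V_2^a$ and $\lambda_2^2\sum_a V_1^a\wedge V_2^a$ as constant combinations of $W_0^a\wedge W_3^a$ and $W_1^a\wedge W_3^a$. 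Since the $W_k^a$ are globally defined holomorphic vector fields on $Z$, any such combination is a global holomorphic bivector, which disposes of holomorphicity and of the transition across $\lambda=\infty$ at once. For the Poisson identity I would invoke the elementary fact that if $P=\sum c^{ij}X_i\wedge X_j$ with numerical constants $c^{ij}$ and pairwise commuting fields $X_i$, then the Schouten square $[P,P]$ expands into terms each containing a Lie bracket $[X_i,X_j]$ and hence vanishes; applied to the commuting $W_k^a$ this gives $[\mathcal P,\mathcal P]=[\lambda\mathcal P,\lambda\mathcal P]=[\tfrac1\lambda\mathcal P,\tfrac1\lambda\mathcal P]=0$.

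Finally, the linear coordinates, the fields $V_j^a$ and $W_k^a$, and therefore the three bivectors, are all invariant under the lattice translations of $\R^{4m}$, so they descend to $M=T^{4m}$ and to its twistor space, yielding the theorem. I expect the only genuinely delicate step to be the first: matching the a~priori merely smooth $(2,0)$-bivector $\mathcal P$ with the holomorphic frame and checking compatibility across the two charts of $\CP^1$. Once the reduction to the commuting holomorphic fields $W_k^a$ is in hand, both holomorphicity and the Poisson property are automatic, so essentially all of the content lies in that identification.
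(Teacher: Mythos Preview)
Your proposal is correct and follows essentially the same route as the paper: identify $\mathcal P,\ \lambda\mathcal P,\ \tfrac1\lambda\mathcal P$ with $\lambda_1\lambda_2,\ \lambda_1^2,\ \lambda_2^2$ times $\sum_a V_1^a\wedge V_2^a$, observe that these lie in $\Lambda^2(\mathrm{span}\{W_k^a\})$ with the $W_k^a$ global, holomorphic and commuting, and then pass to the torus by translation invariance. Your write-up is slightly more explicit (the $\mathcal O(-2)$ bookkeeping and the sample identity $\sum_a W_0^a\wedge W_1^a=-2i\,\mathcal P$), but there is no substantive difference in approach.
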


In the next section we partially extend this result to the twistor space of arbitrary hyperk\"ahler manifolds.

\section{Holomorphic Poisson structures on twistor spaces of hyperk\"ahler manifolds}

Unlike the complex case, "quaternionic" coordinates like $(w_i^a)$ exist only in the flat case. However, the 2-vector $\mathcal{P}$ is a global complex Poisson structure and doesn't depend on existence of such coordinates. To check whether it is holomorphic we can use the Chern connection defined as the only metric connection for which the $(0,1)$ part coincides with the $\overline{\partial}$ operator on the tangent space. For a Hermitian manifold with metric $g$ and complex structure $J$, this connection is determined by $$g(\nabla^{Ch}_X Y,Z) = g(\nabla^{LC}_X Y,Z) -\frac{1}{2}dF(JX,Y,Z)$$ where $\nabla^{LC}$ is the Levi-Civita connection and $F(X,Y)=g(JX,Y)$ is the fundamental form.

In general, $dF$ for the twistor space contains the component of the curvature of the base manifold $M$. However, in the hyperk\"ahler case it does not, so one expects that the flat and the general curved case will not be different. We confirm this observation with explicit calculations.

\begin{theorem}
If $M$ is hyperk\"ahler, then $\mathcal{P}$ is a holomorphic Poisson structure which vanishes on the two fibers of $\pi:Z\rightarrow \mathbb{C}P^1$ corresponding to $I$ and $-I$ or $\lambda=0,\infty$. The leaves of the symplectic foliation are given by the fibers of $\pi:Z-\{\pi^{-1}(0,\infty)\} \rightarrow \mathbb{C}P^1- \{0,\infty\}$ and the points of $\pi^{-1}(0)$ and $\pi^{-1}(\infty)$. The converse also holds:  if $g$ is a hyperhermitian metric on $M$ and $\mathcal{P}$ is holomorphic Poisson on $Z=Z(M)$, then $g$ is hyperk\"ahler.
\end{theorem}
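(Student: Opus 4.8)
The plan is to treat both implications through one computation, namely that of $\bar\partial\mathcal{P}$, since the Poisson condition is already in hand. A hyperk\"ahler metric is in particular HKT, so Theorem~\ref{L4} immediately gives $[\mathcal{P},\mathcal{P}]=0$; for the forward direction it remains to prove that $\mathcal{P}$ is \emph{holomorphic}. I would phrase holomorphicity through the Chern connection $\nabla^{Ch}$ of $(Z,\mathcal{I})$ introduced in Section~5: a section of the holomorphic bundle $\Lambda^2 T^{1,0}Z$ is holomorphic exactly when its image under the $(0,1)$-part of $\nabla^{Ch}$ vanishes, i.e. when $\nabla^{Ch}_{\bar V}\mathcal{P}=0$ for every $(0,1)$-vector $\bar V$ on $Z$. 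Using $g(\nabla^{Ch}_XY,W)=g(\nabla^{LC}_XY,W)-\tfrac12\,dF_Z(\mathcal{I}X,Y,W)$ with $g_Z$ the natural metric on $Z=M\times S^2$ (the base metric together with a round metric on the sphere factor), this reduces holomorphicity to a concrete identity.

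The key step, and the main obstacle, is to show that for hyperk\"ahler $M$ the curvature of the base drops out of this identity. The decisive input is that hyperk\"ahler is equivalent to $\nabla^{LC}I=\nabla^{LC}J=\nabla^{LC}K=0$. I would fix a quaternionic-Hermitian frame $(Z_i,W_i)$ that is Levi-Civita parallel at the chosen point; since $I,J,K$ are then parallel, the adapted frames $(Z_i^{\lambda},W_i^{\lambda})$ for $I_{\bf a}$ are covariantly constant there, and the horizontal part of $dF_Z$ that would otherwise carry the Riemann tensor of $M$ vanishes. This is precisely the phenomenon advertised in the introduction, and it reduces the local model to the flat computation of Theorem~\ref{flatcase}, where $\mathcal{P}=\lambda_1\lambda_2\sum_a V_1^a\wedge V_2^a$ is manifestly a holomorphic bivector (a holomorphic $\mathcal{O}(2)$-coefficient times globally holomorphic commuting vector fields). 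Carrying out the index bookkeeping across the base and sphere factors and checking that no fibre term contributes spuriously then yields $\nabla^{Ch}_{\bar V}\mathcal{P}=0$.

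The vanishing locus and the foliation are read off from (\ref{cxPoisson}). The scalar factor $\tfrac{-4i\lambda}{1+|\lambda|^2}$ (equivalently $\lambda_1\lambda_2$) vanishes exactly at $\lambda=0,\infty$, which under the stereographic identification are the fibres over $-I$ and $I$; there $\mathcal{P}\equiv0$, so each point of $\pi^{-1}(0)\cup\pi^{-1}(\infty)$ is a zero-dimensional leaf. For $\lambda\neq0,\infty$ the factor is nonzero and $P_{\bf b}+iP_{\bf c}$ is the nondegenerate dual of the fibrewise complex symplectic form $F_{\bf b}+iF_{\bf c}$, so $\mathcal{P}$ has maximal rank; since $\mathcal{P}=((s_{\bf a})_*P_I)^{(2,0)}$ is tangent to $\pi^{-1}({\bf a})=s_{\bf a}(M)$, the holomorphic symplectic leaves are exactly these fibres.

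For the converse I would run the same computation backwards. The expression for $\nabla^{Ch}_{\bar V}\mathcal{P}$ is linear in $\nabla^{LC}I,\nabla^{LC}J,\nabla^{LC}K$ (these are the only terms surviving once $\nabla^{LC}g=0$ and the fixed sphere contributions are accounted for), so holomorphicity $\bar\partial\mathcal{P}=0$ forces a combination of these to vanish; by the $S^2$-family argument of Remark~\ref{L1gen}, its vanishing over all of the sphere is equivalent to $\nabla^{LC}I=\nabla^{LC}J=\nabla^{LC}K=0$, i.e. to $g$ being hyperk\"ahler. Equivalently, a holomorphic $\mathcal{P}$ that is fibrewise nondegenerate away from $\{0,\infty\}$ dualizes to a holomorphic, $\mathcal{O}(2)$-twisted family of fibre symplectic forms, which is exactly the datum in the Hitchin--Karlhede--Lindstr\"om--Ro\v{c}ek characterization \cite{HKLR}; as $Z$ is already the twistor space of the given hyperhermitian $M$, this forces $g$ to be hyperk\"ahler. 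The delicate point in both directions is the same: isolating from $\nabla^{Ch}_{\bar V}\mathcal{P}$ the exact tensor whose vanishing is hyperk\"ahler, and confirming that holomorphicity is strictly stronger than the HKT condition already extracted in Theorem~\ref{L4}.
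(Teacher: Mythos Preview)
Your strategy is the same as the paper's: verify holomorphicity via the Chern connection of the product metric on $Z=M\times S^2$, using $\nabla^{Ch}_{\bar V}\mathcal{P}=0$. The paper organizes the computation slightly differently, and its organization clarifies what is actually being used. It treats the $\partial/\partial\bar\lambda$ direction separately, via the identity $\nabla^{Ch}_{X^{0,1}}Y^{1,0}=[X^{0,1},Y^{1,0}]^{1,0}$ and an explicit bracket calculation $[\partial/\partial\bar\lambda,Z_i^{\lambda}]=\tfrac{1}{1+|\lambda|^2}\overline{W_i^{\lambda}}$, $[\partial/\partial\bar\lambda,W_i^{\lambda}]=-\tfrac{1}{1+|\lambda|^2}\overline{Z_i^{\lambda}}$; the $(2,0)$-part of $[\partial/\partial\bar\lambda,\mathcal{P}]$ then visibly vanishes. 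This step does \emph{not} use the hyperk\"ahler condition at all. For the base $(0,1)$-directions $\overline{Z_i^{\lambda}},\overline{W_i^{\lambda}}$, the paper uses that the global bivectors $\sum Z_i\wedge W_i$ and $\sum(Z_i\wedge\bar Z_i+W_i\wedge\bar W_i)$ are Levi-Civita parallel on $M$ (this is where hyperk\"ahler enters), and then computes $dF_\lambda$ explicitly to check that, because $\omega_I,\omega_J,\omega_K$ are closed, $dF_\lambda$ contains only $d\lambda,d\bar\lambda$ terms, so the correction $\nabla^{Ch}-\nabla^{LC}$ vanishes on these inputs.

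Your phrasing ``reduces the local model to the flat computation of Theorem~\ref{flatcase}'' is a bit dangerous: as the paper notes at the start of Section~5, the holomorphic coordinates $w_i^a$ exist only in the flat case, so you cannot literally import Theorem~\ref{flatcase}. What survives is exactly the two-step Chern connection computation above, and your ``index bookkeeping'' is that computation. Also, choosing a frame Levi-Civita parallel at a point is slightly weaker than what the paper uses: it invokes parallelism of the invariant \emph{bivectors} (equivalently of $I,J,K$), which is the form in which the hyperk\"ahler hypothesis is cleanly consumed. Your discussion of the vanishing locus and the symplectic foliation matches the paper's. As for the converse, the paper's written proof does not spell it out; your two sketches (reading the surviving term of $\nabla^{Ch}_{\bar V}\mathcal{P}$ as a covariant derivative of $I,J,K$, or dualizing to an $\mathcal{O}(2)$-twisted holomorphic fibre symplectic form and invoking \cite{HKLR}) are both reasonable, though the first requires you to actually identify that tensor rather than assert its linear dependence on $\nabla^{LC}I,\nabla^{LC}J,\nabla^{LC}K$.
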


{\it Proof:} Endow the twistor space $Z=M\times CP^1$ with the product metric $g_Z=(g, g_{FS})$ where $g_{FS}$ is the canonical (Fubini-Studi) metric on $CP^1=S^2$. Then $(g_Z, \mathcal{I})$ is a Hermitian structure and we have to show that $\nabla_X^{0,1} \mathcal{P}=0$ for every $(0,1)$ vector $X^{0,1}$ on $Z$. Consider as before a local quaternionic-hermitian frame $Z_i, W_i$ on $M$. The local frame $$ Z_i^{\lambda} = \frac{1}{\sqrt{1+|\lambda|^2}}(\overline{\lambda}Z_i  - \overline{W_i}), W_i^{\lambda} = \frac{1}{\sqrt{1+|\lambda|^2}}(\overline{\lambda}W_i  + \overline{Z_i}), \frac{\partial}{\partial\lambda}$$ consists of smooth $(1,0)$ vectors on $Z$ which which is orthogonal, but not orthonormal because $\frac{\partial}{\partial\lambda}$ is not normalized. Then $\mathcal{P} = \sum_i Z_i^{\lambda}\wedge W_i^{\lambda}$. We use for $X^{0,1}$ the vectors of the conjugate $(0,1)$ basis of the basis above. We first check that $\nabla^{Ch}_{\frac{\partial}{\partial\overline{\lambda}}}\mathcal{P}=0$. To this end we use that $\nabla^{Ch}_{X^{0,1}}Y^{1,0} = [X^{0,1},Y^{1,0}]^{1,0}$ where superscript $\{1,0\}$ means the $(1,0)$-component and same for $\{0,1\}$. From here we have also that $\nabla^{Ch}_{X^{0,1}}Y^{1,0}\wedge Z^{1,0} = [X^{0,1},Y^{1,0}]^{1,0}\wedge Z^{1,0} + Y^{1,0}\wedge[X^{0,1},Z^{1,0}]^{1,0} = [X^{0,1}, Y^{1,0}\wedge Z^{1,0}]^{2,0}$. Since $Z=M\times\mathbb{C}P^1$ as a smooth manifold, $[\frac{\partial}{\partial\overline{\lambda}}, Z_i]= [\frac{\partial}{\partial\overline{\lambda}},W_j]=0$ and we obtain
\begin{equation}
\left[\frac{\partial}{\partial\overline{\lambda}}, Z_i^{\lambda}\right] = \frac{1}{(1+|\lambda|^2)^2}\left(Z_i+\lambda\overline{W_i}\right) = \frac{1}{1+|\lambda|^2}\overline{W_i^{\lambda}}
\end{equation}
as well as
\begin{equation}
\left[\frac{\partial}{\partial\overline{\lambda}}, W_i^{\lambda}\right] = -\frac{1}{1+|\lambda|^2}\overline{Z_i^{\lambda}}
\end{equation}
and
\begin{equation}
\left[\frac{\partial}{\partial\overline{\lambda}}, \mathcal{P}\right] = -\frac{1}{1+|\lambda|^2} (Z_i^{\lambda}\wedge\overline{Z_i^{\lambda}}+W_i^{\lambda}\wedge\overline{W_i^{\lambda}})
\end{equation}

So
\begin{equation}
\nabla^{Ch}_{\frac{\partial}{\partial\overline{\lambda}}}\mathcal{P} = 0
\end{equation}

Now for the other vectors we need to use the definition and the fact that $\sum Z_i\wedge W_i, \sum Z_i\wedge \overline{Z_i}+W_i\wedge \overline{W_i}$ are parallel with respect to the Levi-Civita connection on $Z$, since they are parallel on $M$ and the metric on $Z$ is the direct product of the metric on $M$ and the Fubini-Studi metric. In particular
\begin{equation}
\nabla^{LC}_{\overline{Z}_i^{\lambda}} \mathcal{P} =\nabla^{LC}_{\overline{W}_j^{\lambda}} \mathcal{P} = 0
\end{equation}
since $Z_i^{\lambda}$ is a linear combination of $Z_i,W_j,\overline{Z_I},\overline{W_i}$ and $\mathcal{P}$ is a linear combination of $\sum Z_i\wedge W_i, \sum Z_i\wedge \overline{Z_i}+W_i\wedge \overline{W_i}$ with coefficients depending only on $\lambda$.

The difference between the Chern and Levi-Civita connection is proportional to the differential of $F_{\lambda} = \sum \sigma_i^{\lambda}\wedge\overline{\sigma_i^{\lambda}}+\delta_i^{\lambda}\wedge\overline{\delta_i^{\lambda}}$.
We use again that $$d\sum(\sigma_i\wedge\overline{\sigma_i}+\delta_i\wedge\overline{\delta_i}) = d\sum\sigma_i\wedge\delta_i = d \sum \overline{\sigma_i}\wedge\overline{\delta_i} = 0$$
to obtain
$$
dF_{\lambda} = \frac{2d\lambda}{(1+|\lambda|^2)^2}\wedge\left( \overline{\lambda}\sum(\sigma_i\wedge\overline{\sigma_i}+\delta_i\wedge\overline{\delta_i}) - \sigma_i\wedge\delta_i - \overline{\lambda}^2\sum \overline{\sigma_i}\wedge\overline{\delta_i}\right) +$$
$$
+\frac{2d\overline{\lambda}}{(1+|\lambda|^2)^2}\wedge\left(\lambda\sum(\sigma_i\wedge\overline{\sigma_i}+\delta_i\wedge\overline{\delta_i}) + {\lambda}^2\sum\sigma_i\wedge\delta_i +\sum \overline{\sigma_i}\wedge\overline{\delta_i}\right)
$$
and after substitution we get
\begin{equation}
dF_{\lambda} = \frac{2d\lambda}{1+|\lambda|^2}\left(\sum\overline{\sigma_i^{\lambda}}\wedge\overline{\delta_i^{\lambda}}\right)-\frac{2d\overline{\lambda}}{1+|\lambda|^2}
\left(\sum\sigma_i^{\lambda}\wedge\delta_i^{\lambda}\right)
\end{equation}
From here we see that
\begin{equation}
g(\nabla^{Ch}_{\overline{Z}_i^{\lambda}}W_j^{\lambda}-\nabla^{LC}_{\overline{Z}_i^{\lambda}}W_j^{\lambda}, X) = 0
\end{equation}
and similarly for $\overline{W_i^{\lambda}}$. Hence

\begin{equation}
\nabla^{Ch}_{\overline{Z}_i^{\lambda}} \mathcal{P} =\nabla^{Ch}_{\overline{W}_j^{\lambda}} \mathcal{P} = 0
\end{equation}

which proves that $\mathcal{P}$ is holomorphic. It is of maximal rank on all points of $Z$ except $\pi^{-1}(0)$ and $\pi^{-1}(\infty)$, so the Theorem follows.

{\it Q.E.D.}

\begin{rem}
In the the local basis $Z_i=X_i-iIX_i, W_i = JX_i-iKX_i$, the local vector fields from (\ref{twisted fields}) given as $W_k^i=\frac{1}{2}(I_kX_i -iI_{\bf a}I_kX_i)$ again are well defined for all ${\bf a} \in S^2$ so $\frac{1}{\lambda}\mathcal{P}, \lambda\mathcal{P}$ also can be expressed via $W_k^i$. As a consequence we obtain that they are also globally defined and holomorphic Poisson as in Theorem \ref{flatcase}.One can see this family also by changing the fixed $I$ to another structure of the hypercomplex family in the definition of $\mathcal{P}$.

In \cite{HKLR} the twistor space $Z$ of a hyperk\"ahler manifold is characterized in terms of the real structure on $Z$ and the twisted holomorphic symplectic form on the fibers of $\pi$. A similar characterization could be found in terms of the holomorphic Poisson structure $\mathcal{P}$ above. We leave the details of this discussion and some applications as future work.

\end{rem}

\section{Relations to hypercomplex and quaternionic deformations}

In this section we relate the holomorphic Poisson structure $\mathcal{P}$ on twistor spaces to deformatios of hypercomplex and quaternionic structures in the case of a torus and $K3$-surface.

Deformations of hypercomplex structures are described in terms of their twistor spaces in \cite{PP2}. Let $Z=X\times S^2$ be the twistor space of the hypercomplex manifold $X$ with its structure $\mathcal{I}$ and holomorphic projection $\pi: Z\to S^2$. Then $\pi$ defines an exact sequence
\begin{equation}\label{hcxdefcomplex}
0 \longrightarrow \mathcal{D} \longrightarrow \Theta \stackrel{d\pi}\longrightarrow \Theta_{CP^1}\longrightarrow 0
\end{equation}
where  $\mathcal{D}= Ker(d\pi)$.
There is also an  anti-holomorphic involution $\tau : Z\to Z$ which covers the anti-podal map $\rho$ on $S^2$ i.e. $\pi\circ \tau = \rho\circ\pi$.

  As explained in \cite{PP2}, deformations of hypercomplex structures
  are identified to deformations of the real map $\pi$.
  The latter are described by the cohomology spaces
  $H^k(Z, \mathcal{D})$  \cite{Hor}. The real part of these spaces contains the deformation
  of the hypercomplex structures, where "real" in these terms refers to the fixed point of the involution $\tau$. More precisely, $k=0$ corresponds to the Lie algebra of infinitesimal hypercomplex automorphisms, $k=1$ corresponds to the infinitesimal deformations of the hypercomplex structure and $k=2$ is the space of obstructions to extending an infinitesimal deformation to an actual one. Since there is also a correspondence
  between quaternionic structures and the complex structures on the twistor space
  \cite{Salamon}, the real part of the cohomology spaces $H^k(Z, \Theta_Z)$
  contains the deformation theory of the quaternionic structures.


When $\omega$ is $(1,1)$-form on $Z$, $\mathcal{P}(\omega)$, given by the natural contraction, is a 1-form with coefficients in the tangent sheaf. Since $\mathcal{P}$ is holomorphic, this contraction defines a map $\mathcal{P}: H^{(1,1)}(Z,\mathbb{R})\rightarrow H^1(Z,\Theta_Z)$.
In our case, however, $\mathcal{P}$ doesn't have terms depending on $\frac{\partial}{\partial\lambda}$ so it maps $H^{(1,1)}(Z,\mathbb{R})$ into $H^1(Z,\mathcal{D})$, although this map may have a non-trivial kernel.
For the hypercomplex deformations, we need "real" elements of $H^1(Z,D)$. Since $\rho({\bf a})=-{\bf a}$,  $\mathcal{P}$ is real, meaning that $d\tau(\mathcal{P}) = \overline{\mathcal{P}}$. Similarly, the reality condition holds for the quaternionic deformations.

The main question is the integrability of such infinitesimal deformations. The first result in this direction is due to
Bogomolov \cite{Bog}, who proved the integrability of the infinitesimal complex deformations on a K\"ahler holomorphic symplectic manifold $M$. It has been extended in various ways, one of which is for holomorphic K\"ahler Poisson manifolds by Goto (\cite{Goto}). Based on Bogomolov's method, Hitchin \cite{Hitchin} extended Goto's result to the non-k\"ahler case, but still satisfying an additional condition - either $\partial\overline{\partial}$-lemma, or vanishing of $H^2(M,\mathcal{O})=0$. Later the condition was relaxed in \cite{FM} to surjectivity of the natural map $p: H^2(M,\mathbb{R}) \to H^2(M,\mathcal{O})$, but in fact the proof in \cite{Hitchin} covers this case too, since it only uses the following: for every $\overline{\partial}$-closed $(0,2)$-form $\alpha$, there is $(1,1)$-form $\beta$, such that $\partial\alpha = \overline{\partial}\beta$.
As it is proved in \cite{ES}, the Fr\"{o}licher spectral sequence of the twistor space of a $K3$-surface degenerates at $E_1$ level, so the map $p: H^2(M,\mathbb{R}) \to H^2(M,\mathcal{O})$ is surjective. In particular, we have,
\begin{cor}
If $M$ is a K3-surface and $\pi_1:Z\to M$ is the twistor space and $\omega\in H^{(1,1)}(Z,\mathbb{R})$ is represented by $\tau$-invariant form, then $\mathcal{P}(\omega)$ integrates to a complex deformation of $Z$, which induces quaternionic deformation on $M$
\end{cor}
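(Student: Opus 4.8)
The plan is to read $\mathcal{P}(\omega)$ as an infinitesimal complex deformation of the twistor space $Z$ and then to deduce its integrability from the Bogomolov--Goto--Hitchin unobstructedness theorem, the only substantive point being the verification of that theorem's cohomological hypothesis for the twistor space of a K3 surface. To begin, since $\mathcal{P}$ is holomorphic Poisson, as established above, and $\omega$ is a closed real $(1,1)$-form, the contraction $\mathcal{P}(\omega)$ is a $\overline{\partial}$-closed $\Theta_Z$-valued $(0,1)$-form, hence a Kodaira--Spencer class in $H^1(Z,\Theta_Z)$; as already observed, the absence of any $\partial/\partial\lambda$-term in $\mathcal{P}$ places this class in the subspace $H^1(Z,\mathcal{D})$. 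This is the candidate first-order deformation to integrate.

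The core step is unobstructedness, for which I would invoke Hitchin's extension \cite{Hitchin} of Goto's theorem \cite{Goto}: on a holomorphic Poisson manifold every class of the form $\mathcal{P}(\omega)$ with $\omega$ a closed $(1,1)$-class extends to an actual family, \emph{provided} the single condition isolated above holds, namely that every $\overline{\partial}$-closed $(0,2)$-form $\alpha$ admits a $(1,1)$-form $\beta$ with $\partial\alpha=\overline{\partial}\beta$; equivalently, the natural map $p\colon H^2(Z,\mathbb{R})\to H^2(Z,\mathcal{O})$ is surjective. Accordingly I would not reprove Hitchin's power-series argument but only check this hypothesis for the present $Z$.

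That verification rests on \cite{ES}: the Fr\"olicher spectral sequence of the twistor space of a K3 surface degenerates at $E_1$. Degeneration at $E_1$ identifies the bottom graded piece of the Hodge filtration on $H^2_{dR}(Z,\mathbb{C})$ with $H^{0,2}_{\overline{\partial}}(Z)=H^2(Z,\mathcal{O})$, so the projection $H^2(Z,\mathbb{C})\to H^2(Z,\mathcal{O})$ is onto; passing to real classes via the conjugacy of $H^{2,0}$ and $H^{0,2}$ then yields surjectivity of $p$. With the hypothesis secured, Hitchin's theorem produces a genuine complex deformation of $Z$ whose Kodaira--Spencer class is $\mathcal{P}(\omega)$.

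Finally I would track reality. Because $\omega$ has a $\tau$-invariant representative and $\mathcal{P}$ satisfies $d\tau(\mathcal{P})=\overline{\mathcal{P}}$, the class $\mathcal{P}(\omega)$ is compatible with the anti-holomorphic involution $\tau$, so the integrating family can be chosen $\tau$-equivariant; under Salamon's correspondence \cite{Salamon} between complex structures on a twistor space carrying such a real structure and quaternionic structures on the base, this deformation of $Z$ descends to a quaternionic deformation of $M$. The main obstacle is exactly the integrability step, and within it the passage from $E_1$-degeneration to the surjectivity of $p$ that feeds Hitchin's criterion; the deep analytic inputs (the unobstructedness machinery and the spectral-sequence degeneration) are imported as black boxes, and what remains is the bookkeeping ensuring $\tau$-equivariance survives the integration, so that the end product is genuinely a quaternionic deformation rather than a mere complex deformation of $Z$.
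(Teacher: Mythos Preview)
Your proposal is correct and follows essentially the same route as the paper: invoke Hitchin's unobstructedness theorem for Poisson deformations, verify its cohomological hypothesis (surjectivity of $p\colon H^2(Z,\mathbb{R})\to H^2(Z,\mathcal{O})$) via the Eastwood--Singer $E_1$-degeneration of the Fr\"olicher spectral sequence on the K3 twistor space, and then use $\tau$-invariance together with the Salamon correspondence to descend to a quaternionic deformation of $M$. The paper's argument is precisely the paragraph preceding the corollary; your write-up is more detailed (in particular you spell out why $E_1$-degeneration gives surjectivity of $p$ and why the class lands in $H^1(Z,\mathcal{D})$), but the logical structure is identical.
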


The definition of $\mathcal{P}$ depends on the choice of a fixed complex structure $I$.
Note that by changing $I$ we obtain a 3-dimensional family of real holomorphic Poisson bivectors on $Z$. It is the
real part of the family generated by the bivectors in Theorem 4.1. On the other side the space $$H_-(M) = \{[\omega]\in H^2(M, \mathbb{R})|\omega(IX,IY)=\omega(JX,JY)=\omega(X,Y)$$ is mapped into $H^{(1,1)}(Z,\mathbb{R})$ via $\pi_1^*$. Now, a naive dimension count suggests that the dimension of quaternionic deformations obtained in this way is at most $3(b_2(M) -3)$, which is the same as the dimension of the space of twistor lines in the period domain of $M$. To assert that every twistor line is obtained in this way, however, one also needs $\mathcal{P}$ to be injective.

\vspace{.1in}

In the case of tori, we can explicitly determine the image $\mathcal{P}(H^{(1,1)}(Z,\mathbb{R}))$ and its relations to the deformations.
Following \cite{GPP} define
\begin{equation}\label{omegas}
  \Oomega_1^a=\frac
  {{\overline\lambda}_1d{\overline z}_1^a-{\overline\lambda}_2dz_2^a}
  {|\lambda_1|^2+|\lambda_2|^2},
  \hspace{.2in}
  \Oomega_2^a=\frac
  {{\overline\lambda}_1d{\overline z}_2^a+{\overline\lambda}_2dz_1^a}
  {|\lambda_1|^2+|\lambda_2|^2}.
  \end{equation}
  Then $\{\lambda_1\Oomega_1^a, \lambda_2\Oomega_1^a,
  \lambda_1\Oomega_2^a, \lambda_2\Oomega_2^a\}$ forms a basis for
   the space $H^1(Z, \co )$.
Using the notation from Section 4 and results from \cite{GPP} (see the discussion after Lemma 3), the elements
  $\lambda_1^{1-\ell}\lambda_2^\ell V_i^a\otimes\Omega_j^b$, with
  $0\leq \ell\leq 1$, $1\leq i,j\leq 2$, $1\leq a,b\leq m$
  form a basis for $H^1(Z, \mathcal{D})$. Also the induced cohomology sequence of the complex \ref{hcxdefcomplex} has the first coboundary map $\delta_0$ injective. So the sequence:
   \begin{equation}\label{surjection}
  0\to H^0(Z, p^*\co(2))\to H^1(Z, \mathcal{D})\to H^1(Z, \Theta_Z)\to 0.
  \end{equation}
  is exact and $H^0(Z, p^*\co(2))$ is 3-dimensional. From \cite{GPP} we know that every real element in $H^1(Z, \mathcal{D})$ is tangent to an actual deformation inducing a hypercomplex deformation. Moreover, every real element of $H^1(Z, \Theta_Z)$ defines a quaternionic deformation and every quaternionic deformation of the canonical hypercomplex structure leads to a hypercomplex structure. Also, for every hypercomplex deformation parameter, there is a 3-dimensional hypercomplex deformation within one quaternionic class. Since $dim(H^1(Z,\mathcal{D})=12m^2$ when the torus has dimension $4m$, the local deformation space for hypercomplex deformations is $12m^2$-dimensional and the quaternionic deformations have dimension $12m^2-3$. On the other hand, $H_-(T^{4m})$ has dimension $2m^2+m$ and we have the following:
  \begin{theorem}
  If $\pi_1: Z\to T^{4m}$ is the twistor space of the $4m$-tori, then the map $\mathcal{P}: \pi_1^*(H_-)\to H^1(Z, \mathcal{D})$ is injective for every choice of $\mathcal{P}$ in the 3-dimensional family.
  \end{theorem}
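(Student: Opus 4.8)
The plan is to reduce the cohomological injectivity to a pointwise nondegeneracy statement, exploiting how explicit all the objects are on the flat twistor space.

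First I would fix a nonzero member of the family, $\mathcal{P}=f\sum_a V_1^a\wedge V_2^a$ with $f$ a nonzero homogeneous quadratic in $\lambda_1,\lambda_2$ (the three basis cases being $f\in\{\lambda_1^2,\lambda_1\lambda_2,\lambda_2^2\}$), and recall from the discussion preceding Theorem \ref{flatcase} that $\mathcal{P}$ is a global holomorphic Poisson bivector whose contraction sends $\pi_1^*H_-$ into $H^1(Z,\mathcal{D})$. For $\omega\in H_-$ I would write out the representative $\mathcal{P}(\pi_1^*\omega)=f\sum_a\bigl(V_1^a\otimes\iota_{V_2^a}\omega-V_2^a\otimes\iota_{V_1^a}\omega\bigr)$, a $\mathcal{D}$-valued $(0,1)$-form. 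The first key point is that this representative is already harmonic: since $\omega$ is pulled back from $M$ it contains no $d\lambda,d\overline{\lambda}$, and since each $V_i^a$ is tangent to the $M$-factor it carries no $\partial/\partial\lambda,\partial/\partial\overline{\lambda}$, so each $\iota_{V_i^a}\omega$ is a $(0,1)$-form in the $d\overline z$'s alone; expanding it in the coframe shows it is a $\lambda$-weighted combination of the forms $\Oomega_j^b$ with no $d\overline{\lambda}$-term. Hence $\mathcal{P}(\pi_1^*\omega)$ lies in the span of the harmonic basis $\lambda_1^{2-\ell}\lambda_2^{\ell}V_i^a\otimes\Oomega_j^b$ of $H^1(Z,\mathcal{D})$, and its Dolbeault class vanishes if and only if the form itself vanishes.

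Granting this, injectivity becomes pointwise. Choosing any $\lambda_0\neq 0,\infty$ with $f(\lambda_0)\neq 0$ and assuming $[\mathcal{P}(\pi_1^*\omega)]=0$, I obtain $\sum_a\bigl(V_1^a\otimes\iota_{V_2^a}\omega-V_2^a\otimes\iota_{V_1^a}\omega\bigr)=0$ at $\lambda_0$. Because $\{V_1^a,V_2^a\}_{a}$ is a frame for $T^{1,0}_{I_{\mathbf{a}_0}}M$, hence linearly independent, the coefficient $(0,1)$-forms must all vanish, i.e. $\iota_{V_i^a}\omega=0$ for every $i,a$. Now I would invoke Remark \ref{L1gen}: every $\omega\in H_-$ satisfies $\omega(I_{\mathbf a}X,I_{\mathbf a}Y)=\omega(X,Y)$ for all $\mathbf a\in S^2$ (the diagonal terms sum by the $I,J,K$-invariance while the cross terms cancel, e.g. $\omega(IX,JY)+\omega(JX,IY)=\omega(KX,Y)-\omega(KX,Y)=0$), so $\omega$ is of type $(1,1)$ with respect to $I_{\mathbf{a}_0}$. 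Since the $V_i^a$ span $T^{1,0}_{I_{\mathbf{a}_0}}M$, the Hermitian matrix $\omega(V_i^a,\overline{V_j^b})$ vanishes, whence $\omega=0$ at that point and, by translation invariance, $\omega\equiv 0$. Only $f(\lambda_0)\neq 0$ is used, so all members of the family are handled identically.

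The main obstacle is the harmonicity claim of the first paragraph: I must carry out the explicit expansion of $\iota_{V_i^a}\omega$ in the $\Oomega_j^b$-coframe, using the formulas for $V_i^a$ in terms of $\partial/\partial z,\partial/\partial\overline z$, and verify two things at once. First, that no $d\overline{\lambda}$ survives, so the object is genuinely pulled back along the fibers of $\pi$ and represents a class rather than merely a cochain; second, that the rational-in-$\lambda$ coefficients produced by contracting against the frame recombine, after multiplication by $f$, into constant coefficients against the degree-two monomials $\lambda_1^{2-\ell}\lambda_2^{\ell}$ built into the basis. This $\lambda$-bookkeeping is what both identifies $\mathcal{P}(\pi_1^*\omega)$ with an honest element of $H^1(Z,\mathcal{D})$ and guarantees that the pointwise vanishing above is equivalent to cohomological vanishing. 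Once it is in place, the remainder is the linear-algebra observation that contraction against the nondegenerate bivector $\sum_a V_1^a\wedge V_2^a$ is injective on forms of the complementary type.
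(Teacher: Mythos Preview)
Your proposal is correct and tracks the paper's proof closely. Both arguments hinge on the same computation: showing that $\mathcal{P}(\pi_1^*\omega)$ lands in the span of the explicit harmonic representatives $V_i^a\otimes\overline{\Omega}_j^b$ (with appropriate $\lambda$-weights) for $H^1(Z,\mathcal{D})$, so that class-vanishing equals form-vanishing. You flag this correctly as the main obstacle; the paper carries it out directly by contracting $\mathcal{P}=\sum_a\frac{\partial}{\partial w_1^a}\wedge\frac{\partial}{\partial w_2^a}$ against each $dz_i^a\wedge d\overline{z}_j^b$ and then against each generator of $H_-$, reading off the answer as an explicit combination of the $V_i^a\otimes\overline{\Omega}_j^b$. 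Where you diverge is only in the last step: the paper simply inspects the three computed images and observes they are linearly independent harmonic elements, whereas you argue pointwise, using that $\sum_a V_1^a\wedge V_2^a$ is nondegenerate on each fiber and that every $\omega\in H_-$ is of type $(1,1)$ for every $I_{\mathbf a}$. Your packaging has the advantage of treating the whole three-parameter family at once via the scalar $f$, while the paper writes out the formulas only for one member and leaves the others implicit. One small slip: the harmonic basis quoted in the paper carries degree-one weights $\lambda_1^{1-\ell}\lambda_2^\ell$ rather than the degree-two $\lambda_1^{2-\ell}\lambda_2^\ell$ you wrote, so the $\lambda$-bookkeeping will come out one degree off from what you anticipate; this does not affect the validity of the argument.
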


{\it Proof:} We first note that the contraction $\mathcal{P}(\omega)$ can be considered as a map from the tangent bundle to itself given by the composition $T\stackrel{\omega}\to T^*\stackrel{\mathcal{P}}\to T$  and is determined by:
$$X\wedge Y(\alpha\wedge\beta) = \alpha(Y)\beta\otimes X-\alpha(X)\beta\otimes Y-\beta(X)\otimes Y + \beta(Y)\alpha\otimes X$$
When $\mathcal{P} = \sum_a\frac{\partial}{\partial w_1^a}\wedge \frac{\partial}{\partial w_2^a}$ we obtain
$$\mathcal{P}(dz_1^a\wedge d\overline{z}_1^b)=\frac{\overline{\lambda}_1}{|\lambda_1|^2+|\lambda_2|^2}V_2^a\otimes d\overline{z}_1^b+\frac{\overline{\lambda}_2}{|\lambda_1|^2+|\lambda_2|^2}V_1^b\otimes d\overline{z}_1^a$$

$$\mathcal{P}(dz_1^a\wedge d\overline{z}_2^b)=\frac{\overline{\lambda}_1}{|\lambda_1|^2+|\lambda_2|^2}V_2^a\otimes d\overline{z}_2^b+\frac{\overline{\lambda}_2}{|\lambda_1|^2+|\lambda_2|^2}V_2^b\otimes d\overline{z}_1^a$$

$$\mathcal{P}(dz_2^a\wedge d\overline{z}_1^b)=-\frac{\overline{\lambda}_1}{|\lambda_1|^2+|\lambda_2|^2}V_1^a\otimes d\overline{z}_1^b+\frac{\overline{\lambda}_2}{|\lambda_1|^2+|\lambda_2|^2}V_2^b\otimes d\overline{z}_2^a$$

$$\mathcal{P}(dz_2^a\wedge d\overline{z}_2^b)=\frac{\overline{\lambda}_1}{|\lambda_1|^2+|\lambda_2|^2}V_1^a\otimes d\overline{z}_2^b+\frac{\overline{\lambda}_2}{|\lambda_1|^2+|\lambda_2|^2}V_2^b\otimes d\overline{z}_2^a$$

Now we notice that $H_-(T^{4m})$ is spanned by $dz_1^a\wedge d\overline{z}_1^b+d\overline{z}_2^a\wedge dz_2^b, dz_1^a\wedge d\overline{z}_2^b-d\overline{z}_2^a\wedge dz_1^b, dz_2^a\wedge d\overline{z}_1^b-d\overline{z}_1^a\wedge dz_2^b$. From the definition of $\overline{\Omega}_i^a$ and the calculations above we obtain
$$\mathcal{P}(dz_1^a\wedge d\overline{z}_1^b+d\overline{z}_2^a\wedge dz_2^b) = V_2^a\otimes\overline{\Omega}_1^b+ V_1^b\otimes\overline{\Omega}_2^a$$

$$\mathcal{P}(dz_1^a\wedge d\overline{z}_2^b-d\overline{z}_2^a\wedge dz_1^b) = V_2^a\otimes\overline{\Omega}_2^b+ V_2^b\otimes\overline{\Omega}_2^a$$

$$\mathcal{P}(dz_2^a\wedge d\overline{z}_1^b-d\overline{z}_1^a\wedge dz_2^b) = -V_1^a\otimes\overline{\Omega}_1^b- V_1^b\otimes\overline{\Omega}_1^a$$

From here the Theorem follows.

{\it Q.E.D.}

\end{document}